\numberwithin{equation}{section}
\newtheoremstyle{fancy1}{10pt}{10pt}{\itshape}{12pt}{\textsc\bgroup}{.\egroup}{8pt}{
}
\newtheoremstyle{fancy2}{10pt}{10pt}{}{12pt}{\itshape}{.}{8pt}{ }
\theoremstyle{fancy1}
\newtheorem{cor}[equation]{Corollary}
\newtheorem{lem}[equation]{Lemma}
\newtheorem{prop}[equation]{Proposition}
\newtheorem{thm}[equation]{Theorem}
\newtheorem*{main*}{Result}
\newtheorem*{conjecture*}{Conjecture}
\newtheorem*{cor*}{Corollary}
\newcommand{\gal}[2]{\operatorname{Gal}\left( #1 / #2 \right)}
\newtheorem*{def*}{Definition}
\newtheorem{rem}[equation]{Remark}
\newtheorem*{rem*}{Remark}
\newtheorem{example}[equation]{Example}
\newtheorem*{example*}{Example}
\newtheorem*{examples*}{Examples}
\theoremstyle{remark}
\newtheorem*{case*}{Case}
\newtheorem*{proof1*}{Proof of Corollary \ref{prime2quadraticfields}}
\newtheorem*{proof2*}{Proof of Theorem \ref{nonsplitcase}}
\newtheorem*{proof3*}{Proof of Theorem \ref{groupstructure}}
\newtheorem*{proof4*}{Proof of Corollary \ref{meta-abelian}}
\newcommand{\cref}[1]{Corollary~\ref{#1}}
\newcommand{\Q}{{\mathbb{Q}}}
\newcommand{\Z}{{\mathbb{Z}}}
\def\con#1=#2(#3){#1 \equiv #2 \bmod{#3}}
\begin{document}

\title[Ray Class Groups of Quadratic and cyclotomic fields]{Ray Class Groups of quadratic and cyclotomic fields}

\author{Jing Long Hoelscher}
\address{University of Arizona  \\
     Tucson, AZ 85721}
\email{jlong@math.arizona.edu}

\begin{abstract}
This paper studies Galois extensions over real quadratic number fields or cyclotomic number fields ramified only at one prime. In both cases, the ray class groups are computed, and they give restrictions on the finite groups that can occur as such Galois groups. Also the explicit structure of ray class groups of regular cyclotomic number field is given.
\end{abstract}

\maketitle

\vspace{-0.8em}
\section{Introduction}
Let $K$ be either a real quadratic number field or a cyclotomic number field. We study the Galois extensions over $K$ ramified only at one prime in $K$ by computing the ray class groups of $K$.

In Section $2$, we consider extensions $L$ of real quadratic fields $K=\Q(\sqrt{d})$ ramified only at one prime $\mathfrak{p}$ of $K$. Say $\mathfrak{p}|p$, a rational prime. If $p$ remains prime in $K$, then there exist such extensions $L$ of arbitrarily large degree over $K$ (see Theorem \ref{nonsplitcase}). But if $p$ splits, then $[L:K]$ must be finite, and can be bounded explicitly in terms of the field $\Q(\sqrt{d})$ and the prime $p$. We first compute the ray class numbers in each case, then give restrictions on the finite groups that can occur as Galois groups over $K$ ramified only at $\frak{p}$. In the splitting case, the main results are
\begin{thm}\label{quadraticfields}
Let $K=\Q(\sqrt{d})$ be a real quadratic field with square-free positive integer $d$. Assume the prime $p$ splits completely as $\frak{p}_1\frak{p}_2$ in $K/\Q$, and $p$ does not divide the big class number of $K$. Let $L/K$ be any pro-$p$ extension ramified only at $\mathfrak{p}_1$ with Galois group $G=\gal{L}{K}$. Then $G$ is finite cyclic. 
\end{thm}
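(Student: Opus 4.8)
The plan is to translate the statement into class field theory and reduce it to a purely local computation at $\mathfrak{p}_1$ that exploits the splitting hypothesis. First I would invoke the standard dévissage for pro-$p$ groups. By the Burnside basis theorem a pro-$p$ group $G$ satisfies $d(G) = \dim_{\F_p} G/\Phi(G) = \dim_{\F_p} G^{\mathrm{ab}}/(G^{\mathrm{ab}})^{p}$, so $G$ is topologically procyclic as soon as $G^{\mathrm{ab}}/(G^{\mathrm{ab}})^{p}$ has dimension at most $1$. A procyclic pro-$p$ group is abelian, hence equal to its own abelianization; therefore it suffices to prove that the Galois group of the maximal \emph{abelian} pro-$p$ extension $A$ of $K$ unramified outside $\mathfrak{p}_1$ is finite cyclic, for then the full maximal pro-$p$ extension has finite cyclic Galois group, and so does every quotient, in particular $G=\gal{L}{K}$.

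Next I would identify $\gal{A}{K}$ with the $p$-part of an inverse limit of ray class groups. Writing $\operatorname{Cl}_{\mathfrak{p}_1^{n}}(K)$ for the ray class group of conductor $\mathfrak{p}_1^{n}$, class field theory gives $\gal{A}{K} \cong \varprojlim_{n} \operatorname{Cl}_{\mathfrak{p}_1^{n}}(K)_{(p)}$, and the standard sequence $1 \to \mathcal{O}_K^{\times}/\mathcal{O}_{K,\mathfrak{p}_1^{n}}^{\times} \to (\mathcal{O}_K/\mathfrak{p}_1^{n})^{\times} \to \operatorname{Cl}_{\mathfrak{p}_1^{n}}(K) \to \operatorname{Cl}(K) \to 1$ lets me strip off the class group: since $p$ does not divide the big class number, $\operatorname{Cl}(K)_{(p)}=0$, so $\operatorname{Cl}_{\mathfrak{p}_1^{n}}(K)_{(p)}$ is the $p$-part of $(\mathcal{O}_K/\mathfrak{p}_1^{n})^{\times}$ modulo the image of the global units. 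Here the splitting hypothesis is decisive: since $p$ splits completely, $K_{\mathfrak{p}_1}=\Q_p$, so $(\mathcal{O}_K/\mathfrak{p}_1^{n})^{\times}=(\Z/p^{n})^{\times}$, whose pro-$p$ part is cyclic; passing to the limit identifies the local pro-$p$ unit group with $V:=1+p\Z_p\cong\Z_p$. Thus $\gal{A}{K}\cong V/W$, where $W$ is the closure of the image of $\mathcal{O}_K^{\times}$ in $V$ under $K\hookrightarrow K_{\mathfrak{p}_1}$. As a quotient of the procyclic group $V$, this group is cyclic, which already yields the procyclicity of $G$.

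Finally I would establish finiteness, the crux of the argument and the feature distinguishing the split case from the inert one; it amounts to showing $W\neq 0$, i.e.\ that the fundamental unit $\epsilon$ has infinite order image in $V$. This is where I expect the only real subtlety: one might fear Leopoldt's conjecture is needed, but it can be sidestepped. Because $K\hookrightarrow K_{\mathfrak{p}_1}=\Q_p$ is a field embedding it is injective, and $\epsilon$ has infinite order in $K^{\times}$, so its image has infinite order in $\Q_p^{\times}$ and hence nonzero projection to the pro-$p$ factor $V=1+p\Z_p$. Any nonzero element of $\Z_p$ generates an open subgroup, so $W$ is open in $V$ and $V/W$ is finite cyclic; combining the three steps shows $G^{\mathrm{ab}}$, and therefore $G$, is finite cyclic. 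The main obstacle is exactly this rank count: when $p$ splits the local pro-$p$ unit group has $\Z_p$-rank $1$, matching the unit rank of $K$, so the quotient by the units is finite, whereas in the inert case the local rank jumps to $2$ and the quotient is infinite, consistent with Theorem \ref{nonsplitcase}. I would also treat $p=2$ with care, since then $(\Z/2^{n})^{\times}$ and the pro-$2$ local unit group fail to be procyclic, so the argument must incorporate the infinite places (narrow ray class groups) to preserve cyclicity.
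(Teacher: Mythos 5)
For odd $p$ your proposal is correct, and it takes a genuinely different route from the paper's. The paper separates the two assertions: finiteness is Proposition~\ref{finitecondition} (an explicit computation of $|Cl_K^{\mathfrak{p}_1^k}|$ from the fundamental unit via the exact sequence~\ref{exactsequence}), while cyclicity is Proposition~\ref{cycliccondition}, a global argument: a hypothetical $(\Z/p\Z)^2$-quotient would give a field $K_1$ that is not Galois over $\Q$, compositing with its conjugate $K_2$ produces $H=\gal{K_1K_2}{\Q}$, and the quotient $H/H_0\cong(\Z/p\Z)^2\times\Z/2\Z$ would be an abelian extension of $\Q$ ramified only at $p$ and $\infty$, contradicting the cyclicity of the $p$-part of $\gal{\Q(\zeta_{p^\infty})}{\Q}$; the Burnside basis theorem then finishes. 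You instead get cyclicity and finiteness simultaneously from the local structure: since $p$ splits, the pro-$p$ part of $\varprojlim_n(\mathcal{O}_K/\mathfrak{p}_1^n)^{\times}$ is the procyclic group $1+p\Z_p$, so (using $p\nmid h$) every ray class group mod $\mathfrak{p}_1^n$ has cyclic $p$-part, and the fundamental unit's infinite-order image makes the limit finite. This avoids the wreath-product detour, makes the split/inert dichotomy transparent (local $\Z_p$-rank $1$ versus $2$), and your observation that Leopoldt's conjecture is not needed is correct: injectivity of a single embedding suffices because the unit group has rank one. Note also that for odd $p$ the narrow/ordinary distinction is vacuous, since a pro-$p$ group has no elements of order $2$ and hence real places cannot ramify.

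The gap is your final sentence on $p=2$, and it matters because the theorem as stated includes $p=2$: your proposed repair is exactly backwards. If you pass to \emph{narrow} ray class groups, the unit group entering the sequence~\ref{exactsequence} is the group $\mathcal{O}_+^*$ of totally positive units, which is free of rank one and does \emph{not} contain $-1$, while $(\Z/2^n)^{\times}\cong\Z/2\Z\times\Z/2^{n-2}\Z$ needs two generators; the quotient is then in general not cyclic, and in fact the statement itself fails in that setting. Take $d=17$, so $2$ splits, the big class number is $1$, and $u=4+\sqrt{17}$ has $N(u)=-1$. Then $\mathcal{O}_+^*=\langle u^2\rangle$, and $u^2$ maps into $1+8\Z_2\subset K_{\mathfrak{p}_1}\cong\Q_2$ (the square of any odd $2$-adic unit lies in $1+8\Z_2$), i.e.\ into the squares of $\Z_2^{\times}$; hence for $k\geq 3$ the narrow ray class group mod $\mathfrak{p}_1^k$ surjects onto $(\Z/2^k)^{\times}$ modulo squares, which is $(\Z/2\Z)^2$. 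Concretely, $K(\sqrt{u}\,)$ and $K\bigl(\sqrt{-(5+\sqrt{17})/2}\,\bigr)$ are independent quadratic extensions of $K$ unramified at every finite prime except $\mathfrak{p}_1$, so their compositum is a $(\Z/2\Z)^2$-extension ramified only at $\mathfrak{p}_1$ and the real places. Thus if ramification at infinity is permitted --- which is the paper's own convention, as it works throughout with $\mathcal{O}_+^*$ and the big class group --- Theorem~\ref{quadraticfields} and Corollary~\ref{prime2quadraticfields} are actually false at $p=2$, and no argument can close the gap. The repair that does work is the opposite of yours: read ``ramified only at $\mathfrak{p}_1$'' as also unramified at the infinite places, use \emph{ordinary} ray class groups, and then the full unit group, which contains $-1$, enters the computation; since $\Z_2^{\times}/\{\pm1\}\cong 1+4\Z_2$ is procyclic, your argument goes through verbatim. (The paper does not escape this issue either: its proof of Proposition~\ref{cycliccondition} assumes the extension $1\to(\Z/2\Z)^4\to H\to\Z/2\Z\to 1$ splits, and without splitness $H/H_0$ can be $\Z/4\Z\times\Z/2\Z$, which is a legitimate quotient of $\gal{\Q(\zeta_{2^\infty})}{\Q}$, so no contradiction arises; the $d=17$ example shows the failure is real.)
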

In fact, for any abelian extension $L/K$ ramified only at $\frak{p}_1$, the order $[L:K]$ is bounded explicitly in terms of $K$ and the prime $p$ (see Proposition \ref{finitecondition}). In the case the prime $p=2$ splits in $K/\Q$, any pro-nilpotent extension of $K$, that is ramified only at a single prime above $2$ in $K$, is a finite cyclic $2$-group on certain conditions of $\Q(\sqrt{d})$. 
\begin{cor}\label{prime2quadraticfields}
Let $K=\Q(\sqrt{d})$ be a real quadratic field, where $d$ is a square-free positive integer such that $d\equiv 1$ mod $8$ (so $2$ splits as $2=\mathfrak{p}_1\cdot\mathfrak{p}_2$ in $K$). Assume the big class number $|Cl_K^1|$ of $K$ is $1$. Then the maximal pro-nilpotent quotient $\pi_1^{nilp}(U_{K,\mathfrak{p}_1})$ of $\pi_1(U_{K,\mathfrak{p}_1})$ is $\Z/2^{m-1}\Z$ with $m=\nu_{\frak{p}_1}(u^2-1)-[\frac{N(u)+1}{2}]$, where $u$ is the fundamental unit of $K$ and $\nu_{\frak{p}_1}$ is the valuation at $\frak{p}_1$. Thus if $L$ is any pro-nilpotent Galois extension of $K$ ramified only at $\mathfrak{p}_1$, then $\gal{L}{K}$ is a finite cyclic $2$-group.
\end{cor}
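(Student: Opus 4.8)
The plan is to exploit that a pro-nilpotent profinite group is the direct product of its pro-$\ell$ Sylow quotients, so that $\pi_1^{nilp}(U_{K,\mathfrak{p}_1})\cong\prod_\ell\pi_1^{(\ell)}(U_{K,\mathfrak{p}_1})$, the product running over all rational primes $\ell$ of the maximal pro-$\ell$ quotients of $\pi_1(U_{K,\mathfrak{p}_1})$. It therefore suffices to compute each factor separately and then reassemble.

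First I would dispose of the odd primes $\ell\neq 2$. Since $\mathfrak{p}_1$ lies over $2$ and $\ell$ is prime to the residue characteristic, any pro-$\ell$ extension is at most tamely ramified at $\mathfrak{p}_1$, so its inertia is pro-cyclic of order dividing $N\mathfrak{p}_1-1$. As $2$ splits, the residue field at $\mathfrak{p}_1$ is $\F_2$, whence $N\mathfrak{p}_1-1=1$ and there is no room for ramification: every pro-$\ell$ extension ramified only at $\mathfrak{p}_1$ is in fact unramified at all finite primes, and for odd $\ell$ it is automatically unramified at the two real places as well. Such extensions are governed by the $\ell$-primary part of the class group sitting inside $Cl_K^1$, which is trivial by hypothesis. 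Hence $\pi_1^{(\ell)}(U_{K,\mathfrak{p}_1})=1$ for every odd $\ell$, and the entire pro-nilpotent group collapses onto its $2$-part.

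For $\ell=2$, \tref{quadraticfields} already guarantees that $\pi_1^{(2)}(U_{K,\mathfrak{p}_1})$ is finite cyclic, so it coincides with its own abelianization and can be read off from class field theory. I would identify it with the Galois group of the maximal abelian pro-$2$ extension ramified only at $\mathfrak{p}_1$, that is, with the pro-$2$ part of an inverse limit of ray class groups $Cl_K^{\mathfrak{p}_1^n}$. Using $|Cl_K^1|=1$ to kill the ideal-class contribution, this reduces to a quotient of the local unit group $\mathcal{O}_{\mathfrak{p}_1}^{\times}\cong\Z_2^{\times}$ by the closure of the image of the global unit group $\langle -1,u\rangle$, with the appropriate sign conditions at the two real places built in. The order is then extracted from a purely $2$-adic computation of the image of the fundamental unit: the quantity $\nu_{\mathfrak{p}_1}(u^2-1)=\nu_{\mathfrak{p}_1}(u-1)+\nu_{\mathfrak{p}_1}(u+1)$ measures how deep $u$ sits in the unit filtration of $1+\mathfrak{p}_1$, while the correction $[\tfrac{N(u)+1}{2}]$ records whether the class of $-1$ (equivalently the totally positive unit condition coming from the archimedean places) has already been absorbed. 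Carrying this out is intended to produce exactly $\Z/2^{m-1}\Z$ with the stated $m$.

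Assembling the pieces gives $\pi_1^{nilp}(U_{K,\mathfrak{p}_1})\cong\Z/2^{m-1}\Z$, and since any pro-nilpotent Galois extension $L/K$ ramified only at $\mathfrak{p}_1$ has $\gal{L}{K}$ a quotient of this group, $\gal{L}{K}$ is a finite cyclic $2$-group. I expect the genuine obstacle to be the final order computation: pinning down the exact power of $2$ requires controlling the interaction between the $2$-adic position of $u$ at $\mathfrak{p}_1$ and the behaviour of the units at the archimedean places, which is precisely the information packaged by the norm term $N(u)$.
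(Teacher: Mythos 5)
Your proposal is correct in outline and shares its skeleton with the paper's proof: the paper likewise decomposes the maximal pro-nilpotent group into its pro-$\ell$ factors, kills the odd factors, and identifies the $2$-factor. The genuine difference is how the odd factors die. The paper passes to the Frattini quotient of each odd pro-$\ell$ factor, observes that the resulting abelian extension ramified only at $\mathfrak{p}_1$ must have $2$-power degree by the ray class number formula of \pref{finitecondition} (using $|Cl_K^1|=1$), hence is trivial, and concludes by the Burnside basis theorem. You argue locally instead: for odd $\ell$ the ramification at $\mathfrak{p}_1$ is tame and the residue field is $\F_2$, so there is no room for inertia, and then $|Cl_K^1|=1$ finishes. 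This is more self-contained (it needs no ray class number computation for the odd part), but your justification has a small flaw: the bound ``tame inertia has order dividing $N\mathfrak{p}_1-1$'' is a fact about \emph{abelian} extensions; for a general tame extension the inertia generator $\tau$ only satisfies $\sigma\tau\sigma^{-1}=\tau^{N\mathfrak{p}_1}$ with $\sigma$ Frobenius, and its order need not divide $N\mathfrak{p}_1-1$. The patch is one line either way: pass first to the Frattini (hence abelian) quotient, where the embedding of tame inertia into $(\mathcal{O}_K/\mathfrak{p}_1)^{*}\cong\F_2^{*}=\{1\}$ is legitimate (this is in effect what the paper does via Burnside), or note that in a pro-$\ell$, hence pro-nilpotent, group the relation $\sigma\tau\sigma^{-1}=\tau^{2}$ reads $[\sigma,\tau]=\tau$, forcing $\tau$ into every term of the lower central series and so $\tau=1$.

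For $\ell=2$ you are on the paper's track: citing \tref{quadraticfields} (valid here since $2\nmid|Cl_K^1|=1$) reduces everything to the abelian, i.e.\ ray-class, computation. The final order computation that you flag as ``the genuine obstacle'' is not an open issue: it is exactly \pref{finitecondition}, parts a) and b), together with part 1) of the corollary following it, which performs precisely the case split your norm term $[\tfrac{N(u)+1}{2}]$ encodes. When $N(u)=-1$ one has $s_{d,2}=1$ and $m=m_{d,2}=\nu_{\mathfrak{p}_1}(u^2-1)$ by \lref{fundamentalunits}, and part a) gives $|Cl_K^{\mathfrak{p}_1^k}|=2^{m-1}$ exactly for $k\gg 0$; when $N(u)=1$ one has $m=\nu_{\mathfrak{p}_1}(u^2-1)-1$ and part b) gives the same stabilized value $2^{m-1}$. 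Since those results precede the corollary in the paper, you may simply invoke them rather than redo the $2$-adic unit computation, and your assembled conclusion $\pi_1^{nilp}(U_{K,\mathfrak{p}_1})\cong\Z/2^{m-1}\Z$ then follows as you state.
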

On the other hand, in the non-split case, as the next theorem shows, $\gal{L}{K}$ can be arbitrarily large for extensions $L/K$ ramified only at one prime $p$. Any such extension $L/K$ is contained in a ray class field $R_k$ for some positive integer $k$. If we assume the fundamental unit of $K$ has norm $1$, Kitaoka constructed a maximal real sub-extension of the ray class field $R_1$ of prime conductor $p$ in \cite{Ki}. Theorem \ref{nonsplitcase} below gives a description of the ray class fields $R_k$, which relies on the explicit order of the ray class group $|Cl_K^{p^k}|=[R_k:K]$, computed in Proposition \ref{rayclassnumber}. 
\begin{thm}\label{nonsplitcase}
Let $K=\Q(\sqrt{d})$ be a real quadratic field with square-free positive integer $d$, and suppose $p$ remains prime in $K/\Q$. Let $H$, resp.\ $R_k$, be the big Hilbert class field of $K$, resp.\ the ray class field of $K$ mod $p^k$, and let $u=\frac{a+b\sqrt{d}}{2}$ be the fundamental unit of $K$.
\begin{enumerate}
\item[a)]Then for $k>\!\!>0$, $R_{k+1}=R_{k}(\zeta_{p^{k+1}})$;
\item[b)]In the case $p=2$, we also assume $N(u)=-1$ and $a,b$ are both odd. Then $R_k=H(\sqrt{2u},\zeta_{2^k})$ if $k\geq 2$, and $R_1=H$.
\end{enumerate}
\end{thm}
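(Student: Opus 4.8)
The plan is to understand the ray class field $R_k$ of conductor $p^k$ as an extension of the big Hilbert class field $H$, and to identify the relevant part of the ray class group via class field theory. By the exact sequence relating ray class groups to ideal class groups,
\begin{equation*}
\gal{R_k}{H} \cong \bigl( \mathcal{O}_K / p^k \bigr)^{\times} / \im(\mathcal{O}_K^{\times}),
\end{equation*}
so the structure of $R_k$ over $H$ is governed entirely by the local unit group at $p$ modulo the image of the global units. Since $p$ remains prime in $K/\Q$, the residue field is $\F_{p^2}$ and the completion $K_{\frak p}$ is the unramified quadratic extension of $\Q_p$; I would first use Proposition \ref{rayclassnumber} for the order $[R_k:K]$ to pin down the size of this quotient, and then determine its actual structure by analyzing $(\mathcal{O}_K/p^k)^{\times}$ together with the cyclic subgroup generated by the fundamental unit $u$.

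For part (a), the idea is that $(\mathcal{O}_K/p^k)^{\times}$ decomposes as (the unit group of the residue field $\F_{p^2}$) times a pro-$p$ part coming from the principal units $1 + \frak p$, and that for large $k$ the contribution of the global units $\mathcal{O}_K^{\times}$ (generated by roots of unity and the fundamental unit $u$) stabilizes in the following sense: increasing $k$ by one multiplies the $p$-part of the quotient by exactly $p$, and this new layer is realized by adjoining $\zeta_{p^{k+1}}$. Concretely, I would observe that $K(\zeta_{p^{k+1}})/K(\zeta_{p^k})$ is a degree-$p$ extension ramified only at $\frak p$, hence contained in $R_{k+1}$; comparing degrees using Proposition \ref{rayclassnumber} shows that once $k$ is large enough that the unit image has saturated, the single step $R_{k+1}/R_k$ coincides with $R_k(\zeta_{p^{k+1}})/R_k$. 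The phrase $k \gg 0$ is precisely what lets me ignore the bounded, irregular behavior at small $k$ caused by the units.

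For part (b) with $p=2$, the structure is more explicit because $\zeta_{2^k}$ and the units interact delicately. Under the hypotheses $N(u)=-1$ and $a,b$ both odd, the fundamental unit satisfies $u \bar u = -1$, so $u^2 = u \cdot u = -u/\bar u$ relates $u$ to a norm-one element, and $2u$ becomes a natural square-root generator: adjoining $\sqrt{2u}$ produces the ramified quadratic layer at $\frak p$ that is not already captured by the cyclotomic tower. I would verify that $H(\sqrt{2u}, \zeta_{2^k})/K$ is abelian, unramified outside $\frak p = (2)$, and of the correct degree given by Proposition \ref{rayclassnumber}, so that it must equal $R_k$; the base case $R_1 = H$ follows because $(\mathcal{O}_K/2)^{\times}$ is a group of order $3$ (the nonzero elements of $\F_4$) on which the image of the units is everything, leaving no ramified contribution at level one. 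Checking that $\sqrt{2u}$ lies in $R_k$ but not in $H(\zeta_{2^k})$, and that together with the cyclotomic part it generates the full ray class field rather than a proper subfield, is the main obstacle: this requires a careful local computation at $2$ in the unramified quadratic extension $K_{\frak p}/\Q_2$, tracking the $2$-adic valuations $\nu_{\frak p}(u^2-1)$ and the behavior of $2u$ against the principal unit filtration, precisely the delicate unit analysis that distinguishes $p=2$ from odd primes.
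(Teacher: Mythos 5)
Your strategy is the same as the paper's: for a) combine Proposition \ref{rayclassnumber} (which gives $[R_{k+1}:R_k]=p$ for $k\gg 0$) with the fact that adjoining $\zeta_{p^{k+1}}$ produces a degree-$p$ subextension of $R_{k+1}/R_k$; for b) exhibit $H(\sqrt{2u},\zeta_{2^k})$ as a subfield of $R_k$ and match degrees against Corollary \ref{rayclassnumbercor}. Two repairs are needed in a). First, ``ramified only at $\mathfrak{p}$'' does not imply ``contained in $R_{k+1}$'' (the field $K(\zeta_{p^{k+2}})$ is also ramified only at $\mathfrak{p}$); what you actually need is that the conductor of $K(\zeta_{p^{k+1}})/K$ divides $(p^{k+1})$, which holds because conductor exponents are unchanged under the unramified base change $K_{\mathfrak{p}}/\Q_p$. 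Second, you must rule out $\zeta_{p^{k+1}}\in R_k$, which follows from the same computation (that conductor is exactly $(p)^{k+1}$). Also, since the paper's $H$ and $R_k$ are the big (narrow) class fields, your description of $\gal{R_k}{H}$ should use the totally positive units $\mathcal{O}_+^*$ rather than all of $\mathcal{O}_K^{\times}$; this matters in a), where $N(u)=1$ is allowed.

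The genuine gap is in b). You correctly isolate the two facts that make the argument work --- that $H(\sqrt{2u},\zeta_{2^k})$ has conductor dividing $(2^k)$, hence lies in $R_k$, and that its degree over $K$ is $2^k|Cl_K^1|$ --- but you defer both as ``the main obstacle'' requiring ``a careful local computation,'' and never perform it. Those two facts \emph{are} the proof; everything else in your outline is formal. Moreover, the deferred computation is not routine bookkeeping: since $\nu_{\mathfrak{p}}(2u)=1$, the polynomial $x^2-2u$ is Eisenstein at $\mathfrak{p}$, so the different of $K_{\mathfrak{p}}(\sqrt{2u})/K_{\mathfrak{p}}$ is $(2\sqrt{2u})$ and the conductor of $K(\sqrt{2u})/K$ at $\mathfrak{p}$ is $\mathfrak{p}^3$, not $\mathfrak{p}^2$. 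Consequently $K(\sqrt{2u})\subseteq R_k$ holds only for $k\geq 3$; for those $k$ the claim survives because $\sqrt{2}\in\Q(\zeta_{2^k})$, so that $H(\sqrt{2u},\zeta_{2^k})=H(\sqrt{u},\zeta_{2^k})$ and $u$ is a unit, whose square root generates an extension of conductor exponent at most $2$. At $k=2$ the claimed equality actually fails: for $d=5$ (where $H=K$ and $u=\frac{1+\sqrt{5}}{2}$) one checks that $R_2=K(i,\sqrt{u})$, while $K(i,\sqrt{2u})$ has conductor divisible by $\mathfrak{p}^3$ and is not contained in $R_2$. (The paper's own proof asserts that the conductor of $K(\sqrt{2u})/K$ is $2^2$, which is the same error.) So the step you postponed is exactly where the proof can break: carrying it out honestly completes the case $k\geq 3$ and shows that the case $k=2$ needs $\sqrt{u}$ in place of $\sqrt{2u}$.
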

In section $3$, we consider the $p$-part of the ray class group $Cl_K^{\mathfrak{m}}$ of a cyclotomic number field $K=\Q(\zeta_p)$ mod $\mathfrak{m}=(1-\zeta_p)^k$ for an odd prime $p$. Proposition \ref{classnumber} gives the order of $Cl_K^{\mathfrak{m}}$ in the case $p$ is regular, which is a product of a power of $p$ and the class number of $K$; Proposition \ref{rank} shows the rank of $Cl_K^{\mathfrak{m}}$ is $\frac{p+1}{2}$; and Theorem \ref{groupstructure} below gives the explicit structure of the ray class groups for a regular prime $p$. 
\begin{thm}\label{groupstructure}
For $k\geq 3$ and a regular prime $p>2$, we write $k-2=k_0+m(p-1)$, where $0\leq k_0\leq p-2$ and $m=[\frac{k-2}{p-1}]$. Then the ray class group of $K=\Q(\zeta_p)$ mod $\frak{m}=(1-\zeta_p)^k$ is:
$$Cl^{\frak{m}}_K=\left\{ \begin{array}{cc}
    Cl_K\times (\Z/p^{m+1}\Z)^{[\frac{k_0}{2}]}\times (\Z/p^m\Z)^{\frac{p+1}{2}-[\frac{k_0}{2}]} & \text{ if } k_0\neq p-2, \\
    Cl_K\times (\Z/p^{m+1}\Z)^{\frac{p-1}{2}}\times(\Z/p^m\Z) & \text{ if } k_0=p-2.
    \end{array} \right.
  $$
\end{thm}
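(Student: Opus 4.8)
The plan is to reduce the computation to a purely local one via the ray class exact sequence and then split everything under complex conjugation, letting \pref{classnumber} and \pref{rank} supply the order and the rank so that no unit logarithm ever has to be computed by hand.

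First I would invoke the exact sequence
\[
1 \longrightarrow (\mathcal{O}_K/\mathfrak{m})^{*}\big/\,\overline{\mathcal{O}_K^{*}} \longrightarrow Cl_K^{\mathfrak{m}} \longrightarrow Cl_K \longrightarrow 1,
\]
where $\overline{\mathcal{O}_K^{*}}$ denotes the image of the global units and there is no archimedean factor because $K$ is totally complex. Since $p$ is regular, $p\nmid|Cl_K|$; moreover the cyclotomic units $\frac{1-\zeta_p^{a}}{1-\zeta_p}\equiv a \bmod \mathfrak{p}$ surject onto $\F_p^{*}$, so the prime-to-$p$ part of the left-hand kernel dies and $Cl_K^{\mathfrak{m}}\cong Cl_K\times P$, where $P$ is the $p$-Sylow subgroup. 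Writing $\mathfrak{p}=(1-\zeta_p)$ and $U_i=1+\mathfrak{p}^i$, the $p$-part of $(\mathcal{O}_K/\mathfrak{p}^k)^{*}$ is $U_1/U_k$, so everything comes down to $U_1/U_k$ modulo the units. Here I would use that $\zeta_p\in U_1\setminus U_2$ generates $\mu_p$ with $U_1=\mu_p\times U_2$, that the $p$-adic logarithm gives $U_2/U_k\cong\mathfrak{p}^2/\mathfrak{p}^k$, and that multiplication by $\pi^{-2}$ identifies this with $\mathcal{O}_K/\mathfrak{p}^{k-2}$. Invoking the Eisenstein relation $\pi^{p-1}=p\cdot(\text{unit})$, the image of $\pi^{j}$ in $\mathcal{O}_K/\mathfrak{p}^{n}$ has additive order $p^{\lceil(n-j)/(p-1)\rceil}$, and a length count over $0\le j\le p-2$ yields
\[
\mathcal{O}_K/\mathfrak{p}^{n}\;\cong\;(\Z/p^{m+1})^{k_0}\oplus(\Z/p^{m})^{\,p-1-k_0},\qquad n=k-2=k_0+m(p-1),
\]
so that $U_1/U_k\cong\mu_p\oplus(\mathcal{O}_K/\mathfrak{p}^{k-2})$.

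The central idea is to decompose under complex conjugation $c$. On each graded piece $\mathfrak{p}^{j}/\mathfrak{p}^{j+1}$ one has $c=(-1)^{j}$, so even levels lie in the $+$-eigenspace and odd levels in the $-$-eigenspace; in particular $\mu_p$ (level one) sits in the $-$-part. Because the Hasse unit index of $\Q(\zeta_p)$ is $1$, i.e.\ $\mathcal{O}_K^{*}=\mu_K\,\mathcal{O}_{K^{+}}^{*}$, every global unit is a root of unity times a real unit, so the logarithms of the free units $\epsilon_1,\dots,\epsilon_{(p-3)/2}$ all land in the $+$-eigenspace, while $\zeta_p$ accounts precisely for $\mu_p$. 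Hence the two eigenspaces of $P$ decouple:
\[
P^{-}=(\mathfrak{p}^2/\mathfrak{p}^k)^{-}\cong(\mathcal{O}_K/\mathfrak{p}^{k-2})^{-},\qquad P^{+}=(\mathfrak{p}^2/\mathfrak{p}^k)^{+}\big/\gen{\log\epsilon_1,\dots,\log\epsilon_{(p-3)/2}}.
\]
The odd-level generators give $P^{-}$ completely and with no unit relations: it is $\lfloor k_0/2\rfloor$ copies of $\Z/p^{m+1}$ together with $\tfrac{p-1}{2}-\lfloor k_0/2\rfloor$ copies of $\Z/p^{m}$, of $\rank\tfrac{p-1}{2}$.

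Finally I would assemble the answer purely by bookkeeping. By \pref{rank} the rank of $P$ is $\tfrac{p+1}{2}$, and since $\rank P^{-}=\tfrac{p-1}{2}$ this forces $P^{+}$ to be cyclic. Its order is then determined by $|P^{+}|=|Cl_K^{\mathfrak{m}}|_{p}/|P^{-}|$, with $|Cl_K^{\mathfrak{m}}|$ supplied by \pref{classnumber}; comparing $p$-valuations gives $|P^{+}|=p^{m}$ when $k_0\neq p-2$ and $|P^{+}|=p^{m+1}$ when $k_0=p-2$. Taking $P=P^{+}\oplus P^{-}$ and merging elementary divisors produces exactly the two cases of the statement. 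I expect the main obstacle to be the third paragraph: proving that the units contribute only to the $+$-eigenspace, so that $P^{-}$ is genuinely local with no unit relations, and pinning down the eigenspace structure of $\mathcal{O}_K/\mathfrak{p}^{k-2}$ level by level. Regularity of $p$ is what guarantees there is no hidden $p$-divisibility spoiling this decoupling, and the delicate point is the exact-order accounting that singles out the boundary case $k_0=p-2$ from the generic one.
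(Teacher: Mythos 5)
You are correct, but your route is genuinely different from the paper's. The paper never decomposes anything under complex conjugation: it studies the image of the global units, $\mathcal{O}^{\mathfrak{p}^2}/\mathcal{O}^{\mathfrak{p}^k}$, shows by the valuation count $\nu_{\mathfrak{p}}(x^{p^{m+1}}-1)=\nu_{\mathfrak{p}}(x-1)+(m+1)(p-1)$ that every element has order dividing $p^{m+1}$, and invokes Kummer's Lemma (Theorem $5.36$ of \cite{Wa}: for regular $p$, a unit $\equiv 1$ mod $p$ is a global $p$-th power) to exclude orders below $p^m$; it then gets the order of that unit group from \pref{classnumber} and concludes by confronting the type of $(\mathcal{O}_K/\mathfrak{m})^*$, the type of the unit image, the order, and the rank from \pref{rank}. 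You instead split $U_1/U_k=\mu_p\times U_2/U_k$ under $c$, use the unit-index theorem $\mathcal{O}_K^*=\mu_K\mathcal{O}_{K^+}^*$ (Theorem $4.12$/Corollary $4.13$ of \cite{Wa}) to push all unit relations into the plus eigenspace, compute $P^-$ as a purely local object, and let the rank force $P^+$ to be cyclic. Each approach buys something: the paper's avoids the eigenspace bookkeeping and the unit-index theorem, using regularity more directly (via Kummer's Lemma); yours isolates exactly where the units act, determines the minus part with no unit input (that step needs no regularity at all), and your endgame is actually tighter --- a cyclic $P^+$ of known order together with an explicit $P^-$ really does determine $P$, whereas order, rank and the exponent bound $p^{m+1}$ alone do not exclude elementary divisors below $p^m$ in a quotient (e.g.\ $(m+1,m+1,m-1)$ versus $(m+1,m,m)$), so the closing ``forces'' in the paper's proof glosses over a point that your decomposition renders moot.

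Three details need patching, none fatal and all comparable to the looseness in the paper's own write-up. (i) Multiplication by $\pi^{-2}$ is not $c$-equivariant (since $c(\pi^{2})=\zeta_p^{-2}\pi^{2}$), so do not transport the eigenspace decomposition through it; instead compute $(\mathfrak{p}^2/\mathfrak{p}^k)^-$ directly: anti-invariant elements ($c(x)=-x$) of the completion have odd valuation, the valuation filtration on them jumps by $\Z/p$ exactly at the odd levels, and multiplication by $p$ shifts levels by $p-1$; this yields precisely your $(\Z/p^{m+1}\Z)^{[\frac{k_0}{2}]}\times(\Z/p^m\Z)^{\frac{p-1}{2}-[\frac{k_0}{2}]}$. (ii) You need that the wild component of a real unit is real, i.e.\ that the decomposition $x=\omega(x)\langle x\rangle$ commutes with $c$ and that $\langle\epsilon\rangle\in U_2$ for real $\epsilon$, so that the $p$-part of the unit image is $\mu_p$ times a subgroup of $(U_2/U_k)^+$; this is what makes $P^-=(U_2/U_k)^-$ exact. (iii) \pref{rank} is stated only for $k\gg 0$; for arbitrary $k$ use the surjection $Cl_K^{\mathfrak{p}^{k'}}\twoheadrightarrow Cl_K^{\mathfrak{p}^{k}}$ for $k'\geq k$ to get the upper bound $\frac{p+1}{2}$ on the $p$-rank (the upper bound is all you use), and note that when $m=0$ the rank of $P^-$ is only $[\frac{k_0}{2}]$, but then $|P^+|\leq p$ and cyclicity is automatic.
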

The formula given in the above theorem is consistent with the data in Remark \ref{data}, computed using \cite{PARI2}. As an application, we consider tamely ramified meta-abelian Galois extension over $\Q$ ramified only at $p$.
\begin{cor}\label{meta-abelian}
Let $K/\Q$ be a meta-abelian Galois extension only tamely ramified at $p$ and unramified everywhere else. Then $K$ is contained in the Hilbert class field of $\Q(\zeta_p)$.
\end{cor}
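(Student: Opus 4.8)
The plan is to reduce the metabelian situation to an abelian one over $k:=\Q(\zeta_p)$, and then to observe that the pertinent ray class field of $k$ is nothing more than its Hilbert class field $H$. Write $G=\gal{K}{\Q}$; by hypothesis $G$ is metabelian, so the commutator subgroup $G'=[G,G]$ is abelian. Let $F=K^{G'}$ be the fixed field, the maximal abelian subextension, with $\gal{F}{\Q}\cong G/G'$. Then $F/\Q$ is abelian, unramified outside $p$, and tamely ramified at $p$. By the Kronecker--Weber theorem $F\subseteq\Q(\zeta_{p^m})$ for some $m$, and since the wild inertia at $p$ is exactly $\gal{\Q(\zeta_{p^m})}{\Q(\zeta_p)}$, tameness forces $F\subseteq\Q(\zeta_p)=k$.

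Next I would pass to the compositum $M:=Kk$. The intersection $K\cap k$ is Galois and abelian over $\Q$, hence $K\cap k\subseteq F$; as $F\subseteq K$ and $F\subseteq k$ give the reverse inclusion, we get $K\cap k=F$. Restriction then yields $\gal{M}{k}\cong\gal{K}{K\cap k}=\gal{K}{F}=G'$, which is abelian, so $M/k$ is abelian. Because $K/\Q$ and $k/\Q$ are each tamely ramified at $p$ and unramified elsewhere, their compositum $M/\Q$ is again tamely ramified at $p$ and unramified elsewhere (subextensions of the maximal tamely ramified local extension form a field). Consequently $M/k$ is abelian, unramified outside $\fp=(1-\zeta_p)$, and at most tamely ramified at $\fp$, so its conductor divides $\fp$. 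Therefore $M$ lies in the ray class field $R_1$ of $k$ modulo $\fp$.

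The crux is the identification $R_1=H$. I would use the exact sequence $\mathcal{O}_k^{\times}\to(\mathcal{O}_k/\fp)^{\times}\to Cl_k^{\fp}\to Cl_k\to 1$, noting that $(\mathcal{O}_k/\fp)^{\times}\cong\F_p^{\times}$ and that, since $k$ is totally complex for $p>2$, there is no archimedean contribution distinguishing the narrow and wide class groups. The point is that the unit map is surjective: the cyclotomic units $u_a=\frac{1-\zeta_p^{a}}{1-\zeta_p}=1+\zeta_p+\dots+\zeta_p^{a-1}$ satisfy $u_a\equiv a\pmod{\fp}$, since $\zeta_p\equiv 1\pmod{\fp}$, so as $a$ runs over $1,\dots,p-1$ the reductions exhaust $\F_p^{\times}$. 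Hence the map $(\mathcal{O}_k/\fp)^{\times}\to Cl_k^{\fp}$ is zero, $Cl_k^{\fp}\cong Cl_k$, and $R_1=H$. Combining this with the previous step gives $K\subseteq M\subseteq R_1=H$, as claimed.

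The routine parts here are the group-theoretic tower, Kronecker--Weber, and the compositum being abelian over $k$; the genuine content, and the step I expect to be the main obstacle, is showing that the tame ramification at $\fp$ is entirely absorbed by the units of $\mathcal{O}_k$, i.e.\ that $R_1=H$. That reduces cleanly to the congruence $u_a\equiv a\pmod{\fp}$, so the difficulty is conceptual (recognizing that cyclotomic units surject onto $\F_p^{\times}$) rather than computational. If one prefers, the equality $R_1=H$ can instead be read off from the order formula for $Cl_k^{(1-\zeta_p)^k}$ in Proposition~\ref{classnumber} at $k=1$, but the direct unit argument is self-contained and holds for every prime $p>2$, regular or not.
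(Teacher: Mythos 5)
Your proof is correct, and its second half takes a genuinely different route from the paper's. The first step is identical in both: the maximal abelian subextension ($F$ for you, $K'$ in the paper) is abelian over $\Q$, unramified outside $p$, tame at $p$, hence lies in $k=\Q(\zeta_p)$ by Kronecker--Weber. The paper then places $K$ inside a ray class field $R$ of $k$ of \emph{unspecified} modulus $(1-\zeta_p)^n$ --- glossing over the compositum issue that you treat carefully via $K\cap k=F$ and $\gal{M}{k}\cong\gal{K}{F}=G'$ --- and argues as follows: by Remark \ref{divisibility} the degree $[R:H]$ is a power of $p$, so inertia groups in $R/H$ are $p$-groups, while tameness of $K/\Q$ makes the relevant inertia have order prime to $p$; hence inertia is trivial and $K\subseteq H$. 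You instead spend tameness at the very start: for an abelian extension, tame ramification forces conductor exponent at most $1$, so $M\subseteq R_1$; you then prove $R_1=H$ by exhibiting the cyclotomic units $u_a\equiv a\pmod{\fp}$, which surject onto $(\mathcal{O}_k/\fp)^{\times}\cong\F_p^{\times}$. That surjectivity is precisely the paper's Lemma \ref{k=1} (proved there with the single unit $u_\lambda$, $\lambda$ a primitive root mod $p$), and Remark \ref{divisibility} is itself deduced from that lemma --- so the arithmetic core is shared, but the logical routing differs. Your route buys a sharper intermediate statement ($M\subseteq R_1=H$, with no appeal to ray class groups of higher modulus) and is self-contained; the paper's route avoids the tame-conductor bound, needing only that $K$ lands in \emph{some} ray class field together with the $p$-divisibility of its degree over $H$. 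Both arguments work for every odd prime, regular or not.
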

\smallskip

\section{Extensions over quadratic fields}
In this section, we always denote by $K$ a real quadratic number field $\Q(\sqrt{d})$ where $d$ is a square free positive integer. We consider algebraic extensions $L/K$ ramified only at one prime $\mathfrak{p}$ of $K$. Say $\mathfrak{p}|p$, a rational prime. If $p$ remains prime in $K$, then there exist such extensions $L/K$ of arbitrarily large degree (see Theorem \ref{nonsplitcase}). But if $p$ splits completely in $K/\Q$, the degree $[L:K]$ must be finite, and can be bounded in terms of the field $K$ and the prime $p$ (see Proposition \ref{finitecondition}). 

\subsection{Split Case}
Suppose the rational prime $p$ splits as $p=\frak{p}_1\frak{p}_2$ in the real quadratic field $K=\Q(\sqrt{d})$. The splitting condition is equivalent to the assertion that $d\equiv 1$ mod $8$ in the case $p=2$. Let $\mathcal{O}_K$ be the ring of integers of $K$. We will look at the fundamental unit of $K$.
\begin{lem}\label{fundamentalunits}
Let $K=\Q(\sqrt{d})$ be a real quadratic field with fundamental unit $u$. Assume $2=\frak{p}_1\frak{p}_2$ splits completely in $K/\Q$. Then the $\frak{p}_1$-valuation $\nu_{\frak{p}_1}(u^2-1)\geq 2$.
\end{lem}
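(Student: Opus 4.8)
The plan is to factor $u^{2}-1=(u-1)(u+1)$ and to show that each of the two factors lies in $\mathfrak{p}_1$, so that by additivity of the valuation
$$\nu_{\mathfrak{p}_1}(u^{2}-1)=\nu_{\mathfrak{p}_1}(u-1)+\nu_{\mathfrak{p}_1}(u+1)\geq 1+1=2.$$
Everything thus reduces to controlling the reduction of $u$ modulo $\mathfrak{p}_1$.

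First I would record the local data coming from the hypothesis that $2$ splits completely. Complete splitting means $\mathfrak{p}_1$ has residue degree one and ramification index one, so $\mathcal{O}_K/\mathfrak{p}_1\cong\F_2$ and $\nu_{\mathfrak{p}_1}(2)=1$. Since $u$ is a unit of $\mathcal{O}_K$, its image in the residue field $\F_2$ is a nonzero element, hence equals $1$; that is, $u\equiv 1\pmod{\mathfrak{p}_1}$, which gives $\nu_{\mathfrak{p}_1}(u-1)\geq 1$.

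The key observation is then that in $\F_2$ one has $1=-1$, so the very same reduction also yields $u\equiv -1\pmod{\mathfrak{p}_1}$. Concretely, writing $u+1=(u-1)+2$ and noting that $u-1\in\mathfrak{p}_1$ and $2\in\mathfrak{p}_1$ (as $\nu_{\mathfrak{p}_1}(2)=1$), we conclude $\nu_{\mathfrak{p}_1}(u+1)\geq 1$. Combining this with the bound on $u-1$ and summing the valuations of the two factors gives $\nu_{\mathfrak{p}_1}(u^{2}-1)\geq 2$, as claimed.

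There is no genuine obstacle: once the residue field is identified as $\F_2$, the statement is forced, precisely because $+1$ and $-1$ collapse to the same class and so the reduction of $u$ kills $u-1$ and $u+1$ simultaneously. The only steps demanding care are the identification of the residue field (which relies on complete splitting, equivalently $\con d=1(8)$ when $p=2$) and the equality $\nu_{\mathfrak{p}_1}(2)=1$; both are immediate from unramifiedness of $\mathfrak{p}_1$ over $2$.
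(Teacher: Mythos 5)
Your proof is correct, and it takes a genuinely different route from the paper's. The paper argues through the explicit shape of the fundamental unit: writing $u=\frac{a+b\sqrt{d}}{2}$, it uses the norm equation $a^2-b^2d=\pm4$ together with the fact that the quadratic residues mod $8$ are $0,1,4$ to conclude that $a$ and $b$ are both even, hence $u=1+2t$ with $t\in\mathcal{O}_K$, and then factors $u^2-1=4t(t+1)$, whose $\mathfrak{p}_1$-valuation is at least $\nu_{\mathfrak{p}_1}(4)=2$. You instead argue purely locally: complete splitting makes the residue field $\mathcal{O}_K/\mathfrak{p}_1\cong\F_2$, any unit reduces to $1$ there, and since $1=-1$ in $\F_2$ both factors $u-1$ and $u+1$ lie in $\mathfrak{p}_1$, so additivity of the valuation gives the bound. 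Your argument is shorter and more general: it never uses that $u$ is the fundamental unit, nor its norm, nor the half-integer representation, and it applies verbatim to any unit of any number field at any prime whose residue field is $\F_2$ (including the ramified case). What the paper's computation buys in exchange is a stronger intermediate fact, namely $u\equiv 1$ modulo the full ideal $2\mathcal{O}_K$, i.e.\ a congruence at $\mathfrak{p}_1$ and $\mathfrak{p}_2$ simultaneously (so $4\mid u^2-1$ in $\mathcal{O}_K$); this explicit-unit style of computation is the same one the paper reuses in Lemma~\ref{units} for the non-split case. Both arguments establish the stated inequality.
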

\begin{proof}
The prime $2$ splits completely in $K/\Q$, so $d\equiv 1 \,\,(\!\!\!\!\mod 8)$. And the fundamental unit $u$ is of the form $\frac{a+b\sqrt{d}}{2}$, where $a$ and $b$ have the same parity. Since $N(u)=\pm 1$, i.e.\ $a^2-b^2d=4N(u)=\pm 4$, we have $a^2-b^2\equiv a^2-b^2d \equiv 4 \mod 8$. We know that the quadratic residues mod $8$ are $0$, $1$ and $4$, so $a$ and $b$ are both even. Write $a=2a'$ and $b=2b'$. Then $a'^2-b'^2d=\pm 1$, so $a'$ and $b'$ have different parities. Now $u=1+2(\frac{a'-1+b'\sqrt{d}}{2})\equiv 1$ mod $2$, where $\frac{a'-1+b'\sqrt{d}}{2}\in\mathcal{O}_K$. So $\nu_{\frak{p}_1}(u^2-1)=\nu_{\frak{p}_1}(4(\frac{a'-1+b'\sqrt{d}}{2})(\frac{a'-1+b'\sqrt{d}}{2}+1))\geq 2$. 
\end{proof}
For any integer $k\geq 1$, denote by $Cl_K^{\mathfrak{m}}$ the ray class group of $K$ modulo $\frak{m}=\mathfrak{p}_1^k$. Let $\mathcal{O}^*_+$, resp. $\mathcal{O}_+^{\frak{m}}$, be the group of totally positive units of $\mathcal{O}_K$, resp. of totally positive units $\equiv 1$ mod $\frak{m}$. We consider any abelian extension $L$ over $K$ ramified only at $\mathfrak{p}_1$. Define $s_{d,p}$ to be the order of $\mathcal{O}_+^*/\mathcal{O}^{\mathfrak{p}_1}_+$, so $s_{d,p}$ is the smallest integer such that $u_0^{s_{d,p}}\equiv 1$ mod $\frak{p}_1$. Also define $m_{d,p}=\nu_{\frak{p}_1}(u_0^{s_{d,p}}-1)$ to be the $\frak{p}_1$-valuation of $u_0^{s_{d,p}}-1$, so $m_{d,p}\geq 1$. The proposition below says that $\gal{L}{K}$ is finite and is bounded explicitly in terms of $s_{d,p}$, $m_{d,p}$ and the big class number.
\begin{prop}\label{finitecondition}
Suppose $K=\Q(\sqrt{d})$ is a real quadratic field with $d$ square free. Assume a prime $p=\frak{p}_1\frak{p}_2$ splits completely in $K/\Q$. Let $L$ be any abelian extension of $K$ ramified only at $\frak{p}_1$, then $\gal{L}{K}$ is finite. 
\begin{enumerate}
\item[a)]
If we also assume the fundamental unit $u$ of $K$ has norm $-1$ when $p=2$, then for any prime $p$ the ray class number 
$$|Cl^{\frak{p}_1^k}_K|=\left\{ \begin{array}{cc}
   \frac{(p-1)p^{m_{d,p}-1}}{s_{d,p}}\cdot |Cl^1_K| & \text{ if } k\geq m_{d,p}, \\
    \frac{p^{k-1}(p-1)}{s_{d,p}}\cdot|Cl_K^1| & \text{ if } k\leq m_{d,p}.
    \end{array} \right.
  $$
\item[b)]In the case $p=2$ and $N(u)=1$, the ray class number $|Cl_K^{\frak{p}_1^k}|\leq 2^{\nu_{\frak{p}_1}(u^2-1)-2}|Cl_K^1|$. 
\end{enumerate}
\end{prop}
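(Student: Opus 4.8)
The plan is to reduce everything to a single exact sequence comparing the ray class group with the big (narrow) class group, and then to determine the order of the image of the totally positive units in $(\mathcal{O}_K/\mathfrak{p}_1^k)^*$ by a lifting-the-exponent argument. First I would record the standard sequence, obtained by enlarging the modulus from the two archimedean places to $\mathfrak{p}_1^k\cdot\infty_1\infty_2$,
$$1 \longrightarrow \frac{(\mathcal{O}_K/\mathfrak{p}_1^k)^*}{\overline{\mathcal{O}_+^*}} \longrightarrow Cl_K^{\mathfrak{p}_1^k} \longrightarrow Cl_K^1 \longrightarrow 1,$$
where $\overline{\mathcal{O}_+^*}$ is the image of the totally positive units under reduction. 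Taking orders gives $|Cl_K^{\mathfrak{p}_1^k}| = \frac{|(\mathcal{O}_K/\mathfrak{p}_1^k)^*|}{[\mathcal{O}_+^*:\mathcal{O}_+^{\mathfrak{p}_1^k}]}\,|Cl_K^1|$. Since $p$ splits completely, $\mathfrak{p}_1$ is unramified of residue degree one, so $\mathcal{O}_K/\mathfrak{p}_1^k \cong \Z/p^k\Z$ and $|(\mathcal{O}_K/\mathfrak{p}_1^k)^*| = p^{k-1}(p-1)$. Because $\mathcal{O}_+^* = \langle u_0\rangle$ is infinite cyclic, the remaining task is to compute the order of $u_0 \bmod \mathfrak{p}_1^k$, which equals $[\mathcal{O}_+^*:\mathcal{O}_+^{\mathfrak{p}_1^k}]$.

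For part a), I compute this order by lifting the exponent. By definition $s_{d,p}$ is the order of $u_0$ in $(\mathcal{O}_K/\mathfrak{p}_1)^*$ and $m_{d,p} = \nu_{\mathfrak{p}_1}(u_0^{s_{d,p}}-1)$, so $u_0^{s_{d,p}} = 1 + \pi^{m_{d,p}}\cdot(\text{unit})$ with $\pi$ a uniformizer. For $p$ odd the relation $\nu_{\mathfrak{p}_1}(u_0^{s_{d,p}p^j}-1) = m_{d,p}+j$ holds for every $j \geq 0$, whence $u_0$ has order $s_{d,p}\,p^{\max(0,\,k-m_{d,p})}$ modulo $\mathfrak{p}_1^k$; dividing $p^{k-1}(p-1)$ by this order yields exactly the two cases of the stated formula. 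When $p=2$ and $N(u)=-1$ we have $u_0 = u^2$, so $s_{d,2}=1$ and $m_{d,2} = \nu_{\mathfrak{p}_1}(u^2-1)$, which is $\geq 2$ by \lref{fundamentalunits}; this lower bound is precisely what makes the lifting-the-exponent step valid at $p=2$ (the doubling anomaly occurs only when the valuation equals $1$), so the same computation goes through. Finiteness of $\gal{L}{K}$ then follows because the orders $|Cl_K^{\mathfrak{p}_1^k}|$ stabilize once $k \geq m_{d,p}$: any abelian $L/K$ ramified only at $\mathfrak{p}_1$ sits inside the tower of ray class fields, whose Galois group is $\varprojlim_k Cl_K^{\mathfrak{p}_1^k}$, and the stabilization renders this inverse limit finite.

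For part b), where $p=2$ and $N(u)=1$, I would avoid the $2$-adic anomaly by passing to $u_0^2 = u^2$. Since $\nu_{\mathfrak{p}_1}(u_0^2-1) = \nu_{\mathfrak{p}_1}(u^2-1) =: N \geq 2$ by \lref{fundamentalunits}, lifting the exponent applies cleanly to $u_0^2$ and gives $\mathrm{ord}(u_0^2 \bmod 2^k) = 2^{\max(0,\,k-N)}$. As $\mathrm{ord}(u_0) \geq \mathrm{ord}(u_0^2)$, the order formula forces $|Cl_K^{\mathfrak{p}_1^k}| \leq \frac{2^{k-1}}{2^{\max(0,k-N)}}\,|Cl_K^1|$, which equals $2^{N-2}|Cl_K^1|$ in the stable range $k \geq N$: here one uses that $u_0 = \pm u$ satisfies $\nu_{\mathfrak{p}_1}(u_0-1) \leq N-1 < N$, so $u_0 \not\equiv 1 \bmod \mathfrak{p}_1^N$ and $\mathrm{ord}(u_0) = 2\,\mathrm{ord}(u_0^2)$, sharpening $2^{N-1}$ to $2^{N-2}$. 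For $k < N$ the ray class number is smaller by the surjections $Cl_K^{\mathfrak{p}_1^{k+1}} \twoheadrightarrow Cl_K^{\mathfrak{p}_1^k}$, so the bound holds a fortiori.

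The main obstacle is the $p=2$ irregularity in lifting the exponent: the identity $\nu(v^{p^j}-1)=\nu(v-1)+j$ fails at $p=2$ precisely when $\nu(v-1)=1$. \lref{fundamentalunits} is exactly the input that removes this difficulty in part a), by guaranteeing $\nu_{\mathfrak{p}_1}(u^2-1)\geq 2$; in part b) the failure is genuine, which is why the result is stated as an inequality rather than an equality, and the care needed is to extract the correct power of $2$ by working with the square $u_0^2$ while separately accounting for the factor of $2$ coming from $u_0$ itself.
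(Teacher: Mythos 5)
Your proposal is correct and follows essentially the same route as the paper's proof: the same class-field-theoretic exact sequence (yours is just the four-term sequence collapsed), the same count $|(\mathcal{O}_K/\mathfrak{p}_1^k)^*|=p^{k-1}(p-1)$, the same lifting-the-exponent computation of the index $[\mathcal{O}_+^*:\mathcal{O}_+^{\mathfrak{p}_1^k}]$ with \lref{fundamentalunits} supplying the bound $\nu_{\mathfrak{p}_1}(u^2-1)\geq 2$ that neutralizes the $p=2$ anomaly, and the same containment of $L$ in ray class fields to conclude finiteness. Your explicit bookkeeping in part b) (that $\operatorname{ord}(u_0)=2\operatorname{ord}(u_0^2)$ since $\nu_{\mathfrak{p}_1}(u_0-1)\leq \nu_{\mathfrak{p}_1}(u^2-1)-1$) is a slightly more careful write-up of the induction the paper runs directly on powers of $u^2$, but it is the same computation.
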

\begin{proof}
By class field theory, we have the following exact sequence:
\begin{equation}\label{exactsequence}
1\rightarrow\mathcal{O}^*_+/\mathcal{O}^{\mathfrak{p}_1^k}_+\rightarrow(\mathcal{O}_K/\mathfrak{p}_1^k)^*\rightarrow Cl^{\mathfrak{p}_1^k}_K\rightarrow Cl^{1}_K\rightarrow 1.
\end{equation}
Proposition $1.4$ in \cite{CDO} gives the order of $(\mathcal{O}_K/\mathfrak{p}_1^k)^*$, which is $(p^f-1)\cdot p^{f(k-1)}=(p-1)p^{k-1}$, since $p$ splits completely. By the Dirichlet unit theorem, we know $\mathcal{O}_+^*$ has only one generator, say $u_0$, i.e.\ $\mathcal{O}_+^*=\{u_0^n\}_{n\in\Z}$. For part a), the valuation $\nu_{\frak{p}_1}((u_0^{s_{d,p}})^p-1)=m_{d,p}+1$: when $p>2$, $\nu_{\frak{p}_1}((u_0^{s_{d,p}})^p-1)=\min{\{pm_{d,p},m_{d,p}+1\}}=m_{d,p}+1$; when $p=2$, the condition $N(u)=-1$ and Lemma \ref{fundamentalunits} implies $m_{d,2}\geq 2$, thus also $\nu_{\frak{p}_1}((u_0^{s_{d,p}})^p-1)=m_{d,p}+1$. Taking successive powers of $p$, for any $k\geq m_{d,p}$, one has $\nu_{\frak{p}_1}((u_0^{s_{d,p}})^{p^{k-m_{d,p}}}-1)=k$ by induction on $k$. So for $k\geq m_{d,p}$, we have $\mathcal{O}^{\frak{p}_1^k}_+=\{u_0^{ns_{d,p}p^{k-m_{d,p}}}\}_{n\in\Z}$ and $\mathcal{O}^*_+/\mathcal{O}^{\frak{p}_1^k}_+\cong\Z/s_{d,p}p^{k-m_{d,p}}$. From the exact sequence \ref{exactsequence}, we have $|Cl^{\mathfrak{p}_1^k}_K|=|Cl^{\mathfrak{p}_1^{m_{d,p}}}_K|=\frac{(p-1)p^{m_{d,p}-1}|Cl_K^1|}{s_{d,p}}$ for $k\geq m_{d,p}$. For $1\leq k\leq m_{d,p}$, we have $|Cl^{\mathfrak{p}_1^k}_K|=\frac{(p-1)p^{k-1}|Cl_K^1|}{s_{d,p}}$. 

For part b), we have $\mathcal{O}^*=\{u^n\}_{n\in\Z}$ since $N(u)=1$. From lemma \ref{fundamentalunits}, we have $\nu_{\frak{p}_1}(u^2-1)\geq 2$. Then for any $k\geq \nu_{\frak{p}_1}(u^2-1)$, by induction we have $\nu_{\frak{p}_1}(u^{2^{k+1-\nu_{\frak{p}_1}(u^2-1)}}-1)=k$. So $\mathcal{O}^{\frak{p}_1^{k}}=\{u^{2^{k+1-\nu_{\frak{p}_1}(u^2-1)}}\}_{n\in\Z}$, thus $\mathcal{O}^*/\mathcal{O}^{\frak{p}_1^{k}}=\Z/(2^{k+1-\nu_{\frak{p}_1}(u^2-1)})$ for $k\geq \nu_{\frak{p}_1}(u^2-1)$. By the exact sequence \ref{exactsequence}, we have $|Cl_K^{\frak{p}_1^k}|=2^{\nu_{\frak{p}_1}(u^2-1)-2}|Cl_K^1|$ for $k\geq \nu_{\frak{p}_1}(u^2-1)$. When $k\leq\nu_{\frak{p}_1}(u^2-1)$, we have the surjective map $Cl_K^{\frak{p}_1^{\nu_{\frak{p}_1}(u^2-1)}}\twoheadrightarrow Cl_K^{\frak{p}_1^k}$, so for any $k>0$, $|Cl_K^{\frak{p}_1^k}|\leq |Cl_K^{\frak{p}_1^{\nu_{\frak{p}_1}(u^2-1)}}|=2^{\nu_{\frak{p}_1}(u^2-1)-2}|Cl_K^1|$, which concludes part b). 

Any abelian extension $L/K$ ramified only at $\mathfrak{p}_1$ is contained in the ray class field for the modulus $\mathfrak{p}_1^k$ with some positive integer $k$.
So $[L:K]$ is bounded by part a) and b).
\end{proof}
\begin{cor}Suppose $K=\Q(\sqrt{d})$ is a real quadratic field with $|Cl_K^1|=1$, where $d$ is a square-free positive integer such that $d\equiv 1$ mod $8$ (so $2$ splits as $2=\frak{p}_1\frak{p}_2$ in $K$). Define $m=\nu_{\frak{p}_1}(u^2-1)-[\frac{N(u)+1}{2}]$, where $u$ is the fundamental unit of $K$. Then
\begin{enumerate}
\item[1)] any abelian extension $L/K$ ramified only at $\frak{p}_1$ is a finite $2$-extension with degree $[L:K]\leq 2^{m-1}$;
\item[2)] every group in $\pi_A(U_{\Q(\sqrt{d}),\mathfrak{p}_1})$ is a quasi-$2$ group, where $\pi_A(U_{\Q(\sqrt{d}),\,\,\mathfrak{p}_1})$ is the set of all Galois groups of finite Galois extensions $L/K$ ramified only at $\frak{p}_1$. 
\end{enumerate}
\end{cor}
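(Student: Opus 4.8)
\emph{Proof proposal.} The plan is to deduce both parts from Proposition~\ref{finitecondition}, with the two values $N(u)=\pm 1$ handled by its two cases and reconciled through the definition of $m$. I would begin by recording two facts about units. Since $2$ splits completely, the residue field $\mathcal{O}_K/\frak{p}_1\cong\F_2$ has trivial unit group, so every unit is $\equiv 1$ mod $\frak{p}_1$ and therefore $s_{d,2}=1$. Moreover, when $N(u)=-1$ the two conjugates of the fundamental unit have opposite signs, so $u$ is not totally positive and the fundamental totally positive unit is $u_0=u^2$; consequently $m_{d,2}=\nu_{\frak{p}_1}(u_0-1)=\nu_{\frak{p}_1}(u^2-1)$. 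A preliminary remark also settles the $2$-extension claim cheaply: because $|(\mathcal{O}_K/\frak{p}_1^k)^*|=2^{k-1}$ and $|Cl_K^1|=1$, the exact sequence \eqref{exactsequence} exhibits $Cl_K^{\frak{p}_1^k}$ as a quotient of a $2$-group, hence as a $2$-group.

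Next I would match the ray class number to the stated bound $2^{m-1}$ in each case. If $N(u)=-1$, then $[\frac{N(u)+1}{2}]=0$ and $m=\nu_{\frak{p}_1}(u^2-1)=m_{d,2}$; plugging $p=2$, $s_{d,2}=1$ and $|Cl_K^1|=1$ into Proposition~\ref{finitecondition}(a) yields $|Cl_K^{\frak{p}_1^k}|=2^{m_{d,2}-1}=2^{m-1}$ for $k\ge m_{d,2}$, with smaller values for smaller $k$. If $N(u)=1$, then $[\frac{N(u)+1}{2}]=1$ and $m=\nu_{\frak{p}_1}(u^2-1)-1$; Proposition~\ref{finitecondition}(b) gives $|Cl_K^{\frak{p}_1^k}|\le 2^{\nu_{\frak{p}_1}(u^2-1)-2}=2^{m-1}$. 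Since every abelian $L/K$ ramified only at $\frak{p}_1$ is contained in the ray class field modulo $\frak{p}_1^k$ for some $k$, in either case $[L:K]\le 2^{m-1}$, which together with the preliminary remark proves 1).

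For 2) I would upgrade the abelian statement to arbitrary Galois groups. A finite group is a quasi-$2$ group precisely when it has no nontrivial quotient of odd order, so let $G=\gal{L}{K}$ for some finite Galois $L/K$ ramified only at $\frak{p}_1$ and suppose $G$ had such a quotient. By the Feit--Thompson theorem this quotient is solvable, hence admits a nontrivial abelian quotient $Q$ of odd order; the fixed field of the corresponding kernel is an abelian extension $M/K$ ramified only at $\frak{p}_1$ with $\gal{M}{K}\cong Q$. Part 1) forces $|Q|=[M:K]$ to be a power of $2$, contradicting that $|Q|$ is odd and greater than $1$, so $G$ is a quasi-$2$ group. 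The only nonroutine ingredient here is the appeal to Feit--Thompson, which is what lets the passage from ``no odd abelian quotient'' to ``no odd quotient at all'' go through; the remainder is the unit bookkeeping of the first two paragraphs, where the single point needing care is that Proposition~\ref{finitecondition}(b) supplies only an inequality, so it is the clean $2$-group observation --- rather than an explicit order --- that secures the $2$-extension conclusion when $N(u)=1$.
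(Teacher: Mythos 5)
Your proposal is correct and takes essentially the same route as the paper: part 1) by matching the cases $N(u)=-1$ and $N(u)=1$ to parts a) and b) of Proposition~\ref{finitecondition} through the definition of $m$, and part 2) by reducing a hypothetical nontrivial odd-order quotient to an odd abelian quotient (via Feit--Thompson, which the paper invokes implicitly with ``of odd order, thus solvable'') and contradicting part 1). The only differences are cosmetic, e.g.\ you obtain $s_{d,2}=1$ from the residue field being $\F_2$ rather than from Lemma~\ref{fundamentalunits}.
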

\begin{proof}
For part $1)$, if $N(u)=-1$, then $m=\nu_{\frak{p}_1}(u^2-1)=m_{d,2}$ and $u_0=u^2$. From Lemma \ref{fundamentalunits}, we know $\nu_{\frak{p}_1}(u_0-1)\geq 2$, thus $s_{d,2}=1$. We get $|Cl_K^{\frak{p}_1^k}|\leq 2^{m-1}$ by applying part a) of Proposition \ref{finitecondition}. If $N(u)=1$, $m=\nu_{\frak{p}_1}(u^2-1)-1$ and we also have $|Cl_K^{\frak{p}_1^k}|\leq 2^{m-1}$ by part b) of Proposition \ref{finitecondition}. Since $Cl_K^1$ is trivial, we have that $\gal{L}{K}$ is a $2$-group, and $[L:K]\leq 2^{m-1}$. This proves part $1)$. 

For part $2)$, take any group $G\in \pi_A(U_{\Q(\sqrt{d}),\mathfrak{p}_1})$. Denote by $p(G)$ the subgroup of $G$ generated by all elements of order a power of $2$. Then $G/p(G)$ is of odd order, thus solvable. If $G/p(G)$ is nontrivial, then there exists a nontrivial abelian odd extension of $K$ ramified only at $\mathfrak{p}_1$. This is impossible, since part $1)$ says that any such abelian extension has degree a power of $2$. So $G/p(G)$ is trivial, i.e.\ $G$ is a quasi-$2$ group.
\end{proof}
\begin{example}
In the quadratic field $K=\Q(\sqrt{17})$, the prime $2$ splits into $\mathfrak{p}_1=(\frac{5+\sqrt{17}}{2})$ and $\mathfrak{p}_2=(\frac{5-\sqrt{17}}{2})$. Here the fundamental unit $u=4+\sqrt{17}$ has norm $N(u)=-1$, and $m=\nu_{\mathfrak{p}_1}(u^2-1)=3$. The ray class field of $K$ mod $\mathfrak{p}_1^2$ is the field $\Q(\sqrt{17},\sqrt{4+\sqrt{17}})$.  The maximal abelian extension of $K$ ramified only at $\mathfrak{p}_1$ is the ray class field of $K$ mod $\mathfrak{p}_1^k$ for any $k\geq 3$, which has degree $4$ over $K$.
\end{example}
In the next proposition we will show that any $p$-extension over $K$ with big class number not divisible by $p$, and that is ramified only at $\mathfrak{p}_1$, has to be cyclic.
\begin{prop}\label{cycliccondition}
Let $L$ be a pro-$p$ extension of a quadratic number field $K=\Q(\sqrt{d})$ with the big class number not divisible by $p$. Also assume that $p$ splits completely in $K/\Q$, i.e.\ $\frak{p}=\mathfrak{p}_1\frak{p}_2$, and $L/K$ is ramified only at $\frak{p}_1$. Then $\gal{L}{K}$ is cyclic. 
\end{prop}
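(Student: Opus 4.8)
The plan is to reduce to the abelian case and then read the structure off the ray class groups via the exact sequence (\ref{exactsequence}). For a pro-$p$ group $G$ the minimal number of topological generators equals $\dim_{\F_p}H^1(G,\F_p)=\dim_{\F_p}\Hom(G^{ab},\F_p)$, so $G$ is procyclic exactly when its abelianization $G^{ab}$ is. Hence it suffices to treat the maximal abelian subextension $L^{ab}/K$, whose Galois group is a quotient of $\varprojlim_k Cl^{\mathfrak{p}_1^k}_K$ (taken in the pro-$p$ direction). Conceptually, since $p\nmid|Cl^1_K|$ there are no unramified abelian $p$-extensions of $K$, so $G^{ab}$ is topologically generated by the image of the inertia group at $\mathfrak{p}_1$; because $p$ splits completely, $\mathfrak{p}_1$ has residue degree $1$, the completion is $\Q_p$, and this inertia is a quotient of the local units $(\mathcal{O}_K/\mathfrak{p}_1^k)^*\cong(\Z/p^k\Z)^*$.

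Quantitatively I would argue as follows. Localizing (\ref{exactsequence}) at $p$ and using that $Cl^1_K$ has order prime to $p$, the $p$-part of $Cl^{\mathfrak{p}_1^k}_K$ is the $p$-part of the cokernel of the map from the global units $\mathcal{O}^*$ into $(\mathcal{O}_K/\mathfrak{p}_1^k)^*\cong(\Z/p^k\Z)^*$. For odd $p$ this finishes things immediately: $(\Z/p^k\Z)^*$ is cyclic, every quotient of it is cyclic, and an inverse limit of cyclic $p$-groups under the natural surjections is procyclic; thus $G^{ab}$, and hence $G$, is procyclic.

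The step I expect to be the main obstacle is $p=2$, where $(\Z/2^k\Z)^*\cong\Z/2\times\Z/2^{k-2}$ fails to be cyclic for $k\ge3$, so the non-cyclic factor must be killed by the image of the units. The key point is that $-1\in\mathcal{O}^*$ maps onto the generator of the order-$2$ direct factor (the residues $\equiv 3\bmod 4$), whereas by \lref{fundamentalunits} the fundamental unit already lies in the cyclic part $\equiv 1\bmod 4$. Once one quotients by the full image of $\mathcal{O}^*$ — which is precisely where one uses that $L/K$ is unramified away from $\mathfrak{p}_1$, in particular at the archimedean places, so that $-1$ genuinely imposes a relation rather than being absorbed by a sign at infinity — the remaining group is a quotient of the cyclic factor $\langle 5\rangle\cong\Z/2^{k-2}$, hence cyclic. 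Passing to the inverse limit over $k$ yields a procyclic $G^{ab}$, and the Burnside reduction of the first paragraph upgrades this to procyclicity of $G$ itself. The only real work is the $2$-adic unit bookkeeping in this last step; everything else is formal once (\ref{exactsequence}) and the splitting hypothesis are in hand.
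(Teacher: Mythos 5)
Your reduction to the abelian case coincides with the paper's last step (Burnside basis theorem), but after that you take a genuinely different route. The paper's proof of this proposition never computes a ray class group: it assumes a $(\Z/p\Z)^2$-quotient exists, passes to the corresponding field $K_1$, forms its conjugate $K_2$ over $\Q$, identifies $\gal{K_1K_2}{\Q}$ with a wreath product, and derives a contradiction from an abelian quotient $H/H_0$ which it asserts is ramified only at $p$ and $\infty$. Your argument --- since $\mathfrak{p}_1$ has residue degree one and $p\nmid|Cl_K^1|$, the $p$-part of $Cl_K^{\mathfrak{p}_1^k}$ is a quotient of the $p$-part of $(\mathcal{O}_K/\mathfrak{p}_1^k)^*\cong(\Z/p^k\Z)^*$, which is cyclic for odd $p$ --- is complete for odd $p$, and is shorter and more robust. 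Indeed the paper's contradiction step is itself stated incorrectly: the fixed field of $H_0$ contains $K$, hence is also ramified at the primes dividing $d$, not only at $p$ and $\infty$; for odd $p$ this is repairable (inertia at those primes has order at most $2$ and dies in the $(\Z/p\Z)^2$-quotient), but as written that is a gap in the paper, not in your argument.

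At $p=2$, however, your proof has a genuine gap, located exactly where you predicted trouble. The sequence (\ref{exactsequence}) you invoke does not contain $\mathcal{O}^*$: it contains $\mathcal{O}^*_+/\mathcal{O}^{\mathfrak{p}_1^k}_+$, the \emph{totally positive} units, because the paper's ray class groups are narrow --- the modulus implicitly carries both real places (this is why the ``big'' class number $Cl_K^1$ appears, and why the ray class field $\Q(\sqrt{17},\sqrt{4+\sqrt{17}})$ in the paper's own example is ramified at an infinite place). Since $-1$ is not totally positive, it imposes no relation, and the $\Z/2\Z$-factor of $(\Z/2^k\Z)^*$ need not die. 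This is not pedantry: for $K=\Q(\sqrt{17})$, $u=4+\sqrt{17}$, one has $\nu_{\mathfrak{p}_1}(u^2-1)=3$, so $\mathcal{O}^*_+=\langle u^2\rangle$ maps trivially to $(\mathcal{O}_K/\mathfrak{p}_1^3)^*\cong(\Z/8\Z)^*$, and the narrow ray class group mod $\mathfrak{p}_1^3$ is $(\Z/2\Z)^2$; its ray class field is a $(\Z/2\Z)^2$-extension of $K$, unramified at every finite prime except $\mathfrak{p}_1$, satisfying all hypotheses of the proposition. So under the convention the paper actually uses, the $p=2$ case of the statement is false and no argument can close your gap; under your stricter reading --- ``ramified only at $\mathfrak{p}_1$'' also forbids ramification at the two real places, so the ordinary ray class group and the full unit group $\mathcal{O}^*\ni -1$ are the right objects --- your argument is correct, but you are then proving a corrected statement, and you cannot cite (\ref{exactsequence}) for it. (A small further slip: Lemma \ref{fundamentalunits} does not put $u$ itself in the subgroup $\equiv 1 \bmod 4$; for $\Q(\sqrt{17})$ one has $u\equiv 3\bmod 4$ at $\mathfrak{p}_1$. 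This is harmless, since any quotient of $(\Z/2^k\Z)^*/\langle -1\rangle$ is cyclic no matter where $u$ lands, but it shows the $2$-adic bookkeeping needs more care than your sketch gives it.)
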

\begin{proof}
Denote by $G$ the Galois group $\gal{L}{K}$. Take the fixed subfield $K_0$ of the commutator subgroup $G^{'}$; then $K_0$ is the maximal abelian sub-extension of $L/K$. If $\gal{K_0}{K}$ is not cyclic, we can pick a $\Z/p\Z\times\Z/p\Z$ quotient of $\gal{K_0}{K}$, which corresponds to a subfield $K_1/K$. The field $K_1$ is not Galois over $\Q$, since it is ramified only over $\mathfrak{p}_1$ and unramified over $\mathfrak{p}_2$. The extension $K_1/K$ is totally ramified at $\frak{p}_1$ since the big class number is prime to $p$. Take the conjugate extension $K_2$ of $K_1$ over $\Q$, which will be totally ramified over $\mathfrak{p}_2$ over $K$ and unramified over $\mathfrak{p}_1$; so $K_1/K$ and $K_2/K$ are linearly disjoint. The compositum $K_1K_2$ is Galois over $\Q$. Denote by $H$ the Galois group $\gal{K_1K_2}{\Q}$. Then $H$ is the wreath product of $\gal{K_1}{K}\cong \Z/p\Z\times\Z/p\Z$ and $\gal{K}{\Q}\cong\Z/2\Z$, where the action of $\gal{K}{\Q}$ interchanges the two copies of $\Z/p\Z\times\Z/p\Z$ corresponding to $\gal{K_1}{K}$ and $\gal{K_2}{K}$. Thus $H\cong(\Z/p\Z\times\Z/p\Z)^2\rtimes \Z/2\Z$ is a semi-direct product. Denote by $x_1,x_2$, resp.\ $x_3,x_4$ the generators of $\gal{K_1}{K}$, resp.\ $\gal{K_2}{K}$. The involution $\bar{i}\in\gal{K}{\Q}$ interchanges $K_1$ and $K_2$, and $\bar{i}$ extends to an involution $i\in\gal{K_1K_2}{\Q}$, that sends $x_1$ to $x_3$ and sends $x_2$ to $x_4$. Take the subgroup $H_0$ of $H$ generated by $x_1x_3^{-1}$ and $x_2x_4^{-1}$; then $H_0\cong \Z/p\times\Z/p$. The action of $i$ on $H_0$ sends $x_1x_3^{-1}$ to $x_3x_1^{-1}=(x_1x_3^{-1})^{-1}$, and sends $x_2x_4^{-1}$ to $x_4x_2^{-1}=(x_2x_4^{-1})^{-1}$. Thus $H_0$ is normal in $H$. The quotient $H/H_0=\{\bar{x}_1=\bar{x}_3,\bar{x}_2=\bar{x}_4,\bar{i'}|\bar{x}_1^p=1,\bar{x}_2^p=1,\bar{i'}^2=1,\bar{x}_1\bar{x}_2=\bar{x}_2\bar{x}_1,\bar{x}_1\bar{i'}=\bar{i'}\bar{x}_1,\bar{x}_2\bar{i'}=\bar{i'}\bar{x}_2\}$ is abelian and $H/H_0\cong(\Z/p\Z)^2\times\Z/2\Z$, which corresponds to an abelian extension of $\Q$ ramified only at $p$ and possibly $\infty$. That is impossible by class field theory. So $\gal{K_0}{K}=G/G'$ is cyclic. By the Burnside basis theorem, $\gal{L}{K}$ is also cyclic. 
\end{proof}
Combining Propositions \ref{cycliccondition} and \ref{finitecondition} gives Theorem \ref{quadraticfields} in the introduction. For the case $p=2$, we can get a description of nilpotent extensions over a real quadratic field with a single ramification. Next we will give the proof of Corollary \ref{prime2quadraticfields} in the introduction. 
\begin{proof1*}Suppose $L/K$ is a maximal pro-nilpotent extension. Denote by $G$ the Galois group $\gal{L}{K}$. So $G$ is a pro-nilpotent group. We can decompose  $G$ into a direct product of pro-$p$ groups for various primes $p$: $G=\prod_{j=1}^s P_j$. For each $G_i:=\prod_{j\neq i}P_j$, we denote its fixed subfield of $L/K$ by $L_i$. For the Frattini subgroup $\Phi(P_i)$ of each $P_i$, we denote its fixed subfield of $L_i/K$ by $K_i$.

If $2\mid |P_i|$, then $L_i$ is a pro-$2$ extension over $K$ ramified only at $\mathfrak{p}_1$. By Proposition \ref{cycliccondition}, $L_i/K$ is a cyclic $2$-extension, thus abelian. By Proposition \ref{finitecondition}, the Galois group $\gal{L_i}{K}$ is a \textit{finite} cyclic $2$-group of order $2^{m-1}$, where $m=\nu_{\mathfrak{p}_1}(u^2-1)-[\frac{N(u)+1}{2}]$.

If $2\nmid |P_i|$, then $L_i$ is a pro-$p$ extension over $K$, where $p\mid |P_i|$ is different from $2$. Here $K_i/K$ is abelian, so by Proposition \ref{finitecondition}, $\gal{K_i}{K}$ is a finite $2$-group, thus trivial. By the Burnside basis theorem $P_i\cong \gal{L_i}{K}$ is also trivial.

So the Galois group $\gal{L}{K}$ is a finite cyclic $2$-group of order $2^{m-1}$. \hfill $\square$
\end{proof1*}
\smallskip

\subsection{Non-Split Case}
Now assume the prime $p$ remains inert in the real quadratic field $K=\Q(\sqrt{d})$. Unlike in the split case where the ray class number is bounded in terms of $d$ and $p$, here $Cl_K^{p^k}$ can be arbitrarily large when $k$ increases. 
We will first give some notations. For $k\geq 0$, denote by $Cl_K^{p^k}$ the ray class group of $K$ modulo $\frak{m}=p^k$. Let $\mathcal{O}^*_+$, resp.\ $\mathcal{O}_+^{\frak{m}}$, be the group of totally positive units of the ring of integers $\mathcal{O}_K$, resp.\ of totally positive units $\equiv 1$ mod $\frak{m}$. Let $u_0$ be the generator of $\mathcal{O}^*_+$, so $u_0$ is the fundamental unit $u$ if the norm $N(u)=1$, and $u_0=u^2$ otherwise. Define $s_{d,p}$ to be the smallest integer such that $u_0^{s_{d,p}}\equiv 1$ mod $p$, i.e.\ $s_{d,p}=|\mathcal{O}^*_+/\mathcal{O}^{p}_+|$. And define $m_{d,p}$ to be the valuation $\nu_p(u_0^{s_{d,p}}-1)$, so $m_{d,p}\geq 1$. 
\begin{lem}\label{units}
Let $K=\Q(\sqrt{d})$ be a real quadratic field with a square-free integer $d\equiv 5$ mod $8$ (so $2$ remains prime in $K$). Assume the fundamental unit $u=\frac{a+b\sqrt{d}}{2}$ of $K$ has norm $-1$. Then $m_{d,2}=2$. And $s_{d,2}=3$ if $a$ is odd; $s_{d,2}=1$ if $a$ is even.
\end{lem}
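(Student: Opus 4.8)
The plan is to reduce the whole statement to two ingredients: the reduction of $u$ modulo the inert prime $(2)$, whose residue field is $\mathbb F_4$ with cyclic unit group $\mathbb F_4^*\cong\Z/3$, and the single algebraic identity $u^2-1=a\,u$, which converts every valuation in sight into the valuation of a rational integer. First I would record the reductions. As $N(u)=-1$ we have $u_0=u^2$, so $s_{d,2}$ is the order of $\bar u^{\,2}$ in $(\mathcal{O}_K/2)^*$ and $m_{d,2}=\nu_2(u^{2s_{d,2}}-1)$. Since $d\equiv 5\pmod 8$ the prime $2$ is inert and $\mathcal{O}_K/2\cong\mathbb F_4$; writing $\alpha=\tfrac{1+\sqrt d}{2}$, its class $\omega=\bar\alpha$ is a primitive cube root of unity and $\overline{\sqrt d}=\overline{2\alpha-1}=1$. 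Finally $d\equiv 1\pmod 4$ forces $a\equiv b\pmod 2$ in $u=\tfrac{a+b\sqrt d}{2}$, which produces the two cases of the statement.

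The engine is the identity
\[
u^2-1=a\,u,
\]
equivalent to $u-u^{-1}=a$, which follows from $u^{-1}=-\bar u$ (because $u\bar u=N(u)=-1$) together with $u+\bar u=a$. For $s_{d,2}$: if $a$ is even then $b$ is even, $u=a'+b'\sqrt d$, the norm relation $a'^2-b'^2d=-1$ makes $a',b'$ of opposite parity, and $\bar u=\overline{a'}+\overline{b'}=1$, so $s_{d,2}=1$; if $a$ is odd then $u=\tfrac{a-b}{2}+b\alpha$ gives $\bar u=\overline{\tfrac{a-b}{2}}+\omega\in\{\omega,\omega^2\}$, an element of order $3$, hence $\bar u^2$ also has order $3$ and $s_{d,2}=3$.

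For $m_{d,2}$ I keep using the identity. In the even case $m_{d,2}=\nu_2(u^2-1)=\nu_2(a)$, and a congruence analysis of $a'^2-b'^2d=-1$ modulo $8$ (using $d\equiv 5$ and $b'$ odd) pins down $a'\equiv 2\pmod 4$, i.e.\ $\nu_2(a)=2$. In the odd case $u^6-1=(u^2-1)(u^2+u+1)(u^2-u+1)$, where $\nu_2(u^2-1)=\nu_2(a\,u)=0$; substituting $u^2=1+au$ rewrites the last two factors as $2(pu+1)$ and $2(qu+1)$ with $p=\tfrac{a+1}{2}$, $q=\tfrac{a-1}{2}$ of opposite parity, and since $\bar u\neq 1$ one checks $pu+1$ and $qu+1$ are both nonzero in $\mathbb F_4$, whence $\nu_2(u^6-1)=2$.

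The main obstacle is the even case: the identity only delivers $m_{d,2}=\nu_2(a)$, and one must still show $\nu_2(a)=2$ rather than $1$. This is exactly where $d\equiv 5\pmod 8$ enters — the delicate steps are ruling out ``$a'$ odd, $b'$ even'' (forbidden by the equation modulo $4$) and then extracting $a'\equiv 2\pmod 4$ from the equation modulo $8$. The odd case, by contrast, is mechanical once the factorization is set up, its only substantive input being $\bar u\neq 1$, which is precisely the $s_{d,2}=3$ computation already carried out.
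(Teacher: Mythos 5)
Your proof is correct, and it takes a genuinely different route from the paper's at the key point. The paper's proof rests on a single general claim: any $w=x+y\sqrt{d}\in\Z[\sqrt{d}]$ with $N(w)=-1$ satisfies $\nu_2(w^2-1)=2$; this is proved by the same mod-$8$ analysis you use (forcing $x\equiv 2\pmod{4}$, $y$ odd) together with the factorization $w^2-1=4t(t+1)$ (up to conjugation), where $t=\frac{x-1-y\sqrt{d}}{2}$ and $t$, $t+1$ have odd norms $\mp x/2$. For $a$ even the claim applies directly to $w=u$; but for $a$ odd the paper must first invoke the fact that $u^3\in\Z[\sqrt{d}]$ with $N(u^3)=-1$ (citing $10.2.E$ of \cite{Ri}) so as to apply the claim to $w=u^3$, while $s_{d,2}=3$ is obtained by checking that $u_0-1=au$ and $u_0^2-1=au(2+au)$ are nonzero mod $2$. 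You dispense with both the general claim and the passage to $\Z[\sqrt{d}]$: you determine $s_{d,2}$ structurally, by reducing $u$ into $\mathcal{O}_K/2\cong\mathbb{F}_4$ and reading off the order of $\bar{u}$ in $\mathbb{F}_4^*\cong\Z/3\Z$, and you get the odd-case valuation from the factorization $u^6-1=(u^2-1)(u^2+u+1)(u^2-u+1)$ combined with $u^2=1+au$, each factor's valuation being visible in $\mathbb{F}_4$; notably, the mod-$8$ computation enters only in your even case, and your odd case needs nothing beyond $\bar{u}\neq 1$. What your route buys is self-containedness --- no external citation and no fact about cubes of units lying in $\Z[\sqrt{d}]$ --- plus a transparent structural reason why $s_{d,2}\in\{1,3\}$; what the paper's route buys is economy, since the one norm-$(-1)$ claim does the valuation work in both parity cases and the mod-$8$ analysis appears exactly once. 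Both arguments ultimately share the same two ingredients: the quadratic relation $u^2-1=au$ and the mod-$8$ study of $x^2-y^2d=-1$.
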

\begin{proof}
Pick any $w=x+y\sqrt{d}$ with $N(w)=-1$ and $x,y\in\Z$. We claim that $\nu_2(w^2-1)=2$. We have $x^2-y^2d=N(w)=-1$. Working modulo $8$, we know $x$, $y$ are of the form $x=8x'\pm2$, $y=2y'+1$. Write $t=\frac{x-1-y\sqrt{d}}{2}$; then $N(t)=-x/2$ and $N(t+1)=x/2$, so $\nu_p(t)=0=\nu_p(t+1)$. Thus $w^2-1=4t(1+t)$ has $2$-valuation exactly $2$. Since $N(u)=-1$, $u_0=u^2$ and $\mathcal{O}^*_+=\{u_0^{n}\}_{n\in\Z}=\{u^{2n}\}_{n\in\Z}$. If $2|a$, then by the argument above with replacing $w$ by $u$, we have $m_{d,2}=\nu_2(u_0-1)=2$, thus $s_{d,2}=1$. If $2\nmid a$, $u_0-1=ua\neq 0$ mod $2$ and $u_0^2-1=au(2+ua)\neq 0$ mod $2$. By $10.2.E$ of \cite{Ri}, $u_0^3\in\Z[\sqrt{d}]$ with $N(u_0^3)=-1$. By the same argument above with replacing $w$ by $u^3$, we know $\nu_2(u_0^3-1)=2$, thus $s_{d,2}=3$ and $m_{d,2}=2$.
\end{proof}

\begin{prop}\label{rayclassnumber}
Let $K=\Q(\sqrt{d})$ be a real quadratic field with square-free positive integer $d$. Assume the prime $p$ remains prime in $K/\Q$. 
\begin{enumerate}
\item[1)]If we also assume the fundamental unit $u$ of $K$ has norm $N(u)=-1$ when $p=2$, then for any prime $p$ the ray class number of $K=\Q(\sqrt{d})$ mod $p^k$ is:
$$|Cl^{p^k}_K|=\left\{ \begin{array}{cc}
   \frac{(p^2-1)p^{k-2+m_{d,p}}}{s_{d,p}}\cdot |Cl^1_K| & \text{ if } k\geq m_{d,p}, \\
    \frac{p^{2k-2}(p^2-1)}{s_{d,p}}\cdot|Cl_K^1| & \text{ if } k\leq m_{d,p}.
    \end{array} \right.
  $$
\item[2)]In the case $p=2$ and $N(u)=1$, we also have $|Cl_K^{2^{k+1}}|=2|Cl_K^{2^k}|$ for any $k\geq \nu_2(u^6-1)$.  
\end{enumerate}  
\end{prop}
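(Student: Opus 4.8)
The plan is to run the same class-field-theory exact sequence used in \pref{finitecondition}, now for the modulus $\mathfrak{m}=p^k$:
$$1\rightarrow\mathcal{O}^*_+/\mathcal{O}^{p^k}_+\rightarrow(\mathcal{O}_K/p^k)^*\rightarrow Cl^{p^k}_K\rightarrow Cl^{1}_K\rightarrow 1,$$
and to read off $|Cl^{p^k}_K|$ from the orders of the other three terms. Since $p$ is now inert, the residue field is $\F_{p^2}$, and Proposition $1.4$ of \cite{CDO} gives $|(\mathcal{O}_K/p^k)^*|=(p^2-1)p^{2(k-1)}$; this grows by the factor $p^2$ at each step, which already explains why $|Cl^{p^k}_K|$ is unbounded (contrast the split case, where the factor was $p$ and the unit index caught up so the class number stabilized). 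Everything therefore reduces to computing the index $t_k:=|\mathcal{O}^*_+/\mathcal{O}^{p^k}_+|$, which, since $\mathcal{O}^*_+=\{u_0^n\}_{n\in\Z}$, equals the smallest positive integer with $u_0^{t_k}\equiv 1 \bmod p^k$.

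For part 1) I would compute $t_k$ exactly as in \pref{finitecondition}. Starting from $\nu_p(u_0^{s_{d,p}}-1)=m_{d,p}$, the valuation-raising step $\nu_p\!\left(x^p-1\right)=\nu_p(x-1)+1$ (valid whenever $\nu_p(x-1)\geq 1$ for odd $p$, and whenever $\nu_p(x-1)\geq 2$ for $p=2$) gives by induction $\nu_p\!\left(u_0^{s_{d,p}p^{\,j}}-1\right)=m_{d,p}+j$. For $p=2$ the hypothesis $N(u)=-1$ is exactly what is needed: by \lref{units} it forces $m_{d,2}=2$, so we are already in the range $\geq 2$ where the step is valid. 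Consequently $t_k=s_{d,p}$ for $1\leq k\leq m_{d,p}$ (as $u_0^{s_{d,p}}\equiv 1\bmod p^{m_{d,p}}$ while $\equiv 1\bmod p^k$ still forces $s_{d,p}\mid t_k$) and $t_k=s_{d,p}\,p^{\,k-m_{d,p}}$ for $k\geq m_{d,p}$. Dividing $(p^2-1)p^{2(k-1)}|Cl_K^1|$ by $t_k$ yields the two displayed formulas, with exponents of $p$ equal to $2k-2$ and $k-2+m_{d,p}$ respectively, the two cases agreeing at $k=m_{d,p}$.

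Part 2) is the delicate case: when $N(u)=1$ we have $u_0=u$, and for $p=2$ the valuation-raising step can fail while the valuations are still equal to $1$, so $t_k$ need not double until they stabilize. Here I would argue that it suffices to prove $t_{k+1}=2t_k$ for $k\geq \nu_2(u^6-1)$, because then $|(\mathcal{O}_K/2^{k+1})^*|/|(\mathcal{O}_K/2^k)^*|=4$ and $t_{k+1}/t_k=2$ combine to give $|Cl_K^{2^{k+1}}|=2\,|Cl_K^{2^k}|$. Two ingredients drive this. First, the image of $u$ in $(\mathcal{O}_K/2)^*\cong\F_4^{\,*}$ has order dividing $3$, so $u^3\equiv1\bmod 2$, and then $u^6-1=(u^3-1)(u^3+1)$ is divisible by $4$; hence $n_0:=\nu_2(u^6-1)\geq 2$ always. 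Second, the stable-regime lemma: if $\nu_2(z-1)=n$ with $n\geq 2$, then $\nu_2(z+1)=1$, so $\nu_2(z^2-1)=n+1$ \emph{exactly}. Starting the induction at $k=n_0$, where a short case check on the divisors of $6$ shows $\nu_2(u^{t_{n_0}}-1)=n_0$ exactly, the lemma propagates $\nu_2(u^{t_k}-1)=k$ upward and forces $t_{k+1}=2t_k$ for all $k\geq n_0$.

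The main obstacle is precisely this $p=2$, $N(u)=1$ analysis: the $2$-adic valuations $\nu_2(u^{2^j s}-1)$ behave irregularly while they equal $1$ and only become an arithmetic progression once they reach $2$. The work is to pin down $\nu_2(u^6-1)$ as a safe threshold past which no valuation jumps occur, so that the order of $u$ modulo $2^k$ strictly doubles rather than possibly stalling. The ``no-jump'' observation $\nu_2(z+1)=1$ for $\nu_2(z-1)\geq 2$ is exactly what makes the valuation increase by one, giving both $t_{k+1}\leq 2t_k$ and $t_{k+1}\neq t_k$.
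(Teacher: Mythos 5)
Your proposal is correct and follows essentially the same route as the paper: the class-field-theory exact sequence for the modulus $p^k$, the order $(p^2-1)p^{2(k-1)}$ of $(\mathcal{O}_K/p^k)^*$ from Proposition $1.4$ of \cite{CDO}, and the computation of the unit index $|\mathcal{O}^*_+/\mathcal{O}^{p^k}_+|$ by the valuation-raising induction, with Lemma \ref{units} supplying $m_{d,2}=2$ in the $p=2$, $N(u)=-1$ case. Your only departure is one of completeness rather than method: in part 2) the paper simply asserts the doubling $|\mathcal{O}_+^*/\mathcal{O}_+^{2^{k+1}}|=2|\mathcal{O}_+^*/\mathcal{O}_+^{2^{k}}|$ for $k\geq\nu_2(u^6-1)$, whereas you justify it via the ``no-jump'' observation $\nu_2(z+1)=1$ when $\nu_2(z-1)\geq 2$ and the base-case check at $k=\nu_2(u^6-1)$, which is a worthwhile filling-in of the paper's terse step.
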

\begin{proof}
We have $\nu_p(u_0^{s_{d,p}}-1)=m_{d,p}$ and $\mathcal{O}^*_+=\{u_0^{s_{d,p}n}\}_{n\in\Z}$. For $k\leq m_{d,p}$, by definition we know $\mathcal{O}^{p^k}_+=\{u_0^{s_{d,p}n}\}_{n\in\Z}$, thus $\mathcal{O}^*_+/\mathcal{O}^{p^k}_+\cong\Z/{s_{d,p}}$. For $k\geq m_{d,p}$: if $p>2$, directly we have the valuation $\nu_p(u_0^{s_{d,p}p^{k-m_{d,p}}}-1)=\nu_p(u_0^{s_{d,p}}-1)+k-m_{d,p}=k$; if $p=2$, by Lemma \ref{units} $m_{d,2}=\nu_p(u_0-1)=\nu_p(u^2-1)\geq 2$, so we also have $\nu_p(u_0^{s_{d,p}p^{k-m_{d,p}}}-1)=k$. By induction on $k$, we have $\mathcal{O}^{p^k}_+=\{u_0^{s_{d,p}p^{k-m_{d,p}}n}\}_{n\in\Z}$ for $k\geq m_{d,p}$, thus $\mathcal{O}^*_+/\mathcal{O}^{p^k}_+\cong \Z/p^{k-m_{d,p}}\times \Z/s_{d,p}$. Recall the exact sequence from class field theory, 
\begin{equation}\label{exactsequencequadratic}
1\rightarrow\mathcal{O}^*_+/\mathcal{O}^{\frak{m}}_+\rightarrow(\mathcal{O}_K/\frak{m})^*\rightarrow Cl^{\frak{m}}_K\rightarrow Cl^{1}_K\rightarrow 1.
\end{equation}
Together with the order $|(\mathcal{O}_K/p^k)^*|=(p^f-1)p^{f(k-1)}=(p^2-1)p^{2(k-1)}$, given by Proposition $1.4$ in \cite{CDO}, we get the conclusion in part $1)$ of the proposition.

For part $2)$, the norm $N(u)=1$, so $\mathcal{O}^*_+=\{u^n\}_{n\in\Z}$. We know $u^3\equiv 1$ mod $2$, since $|\mathcal{O}^*_+/\mathcal{O}_+^2|$ divides $|(\mathcal{O}_K/2)^*|=3$. So the valuation $\nu_2(u^6-1)\geq 2$. For any $k\geq \nu_2(u^6-1)$, we have $|\mathcal{O}_+^*/\mathcal{O}_+^{2^{k+1}}|=2|\mathcal{O}_+^*/\mathcal{O}_+^{2^{k+1}}|$.
Proposition $1.4$ in \cite{CDO} says the order $|(\mathcal{O}_K/2^k)|=3\cdot 4^{k-1}$. Combining with the exact sequence \ref{exactsequencequadratic}, we get $|Cl_K^{2^{k+1}}|=2|Cl_K^{2^k}|$ for any $k\geq \nu_2(u^6-1)$.
\end{proof}

\begin{cor}\label{rayclassnumbercor}
Let $K=\Q(\sqrt{d})$ be a real quadratic field, where $d\equiv 5$ mod $8$ is a square-free positive integer. Assume the fundamental unit $u=\frac{a+b\sqrt{d}}{2}$ of $K$ has norm $-1$ and $a,b$ are both odd. Then the ray class number of $K$ mod $2^k$ for any positive integer $k$ is as follows:
 $$|Cl^{2^k}_K|=\left\{ \begin{array}{cc}
    |Cl_K^1| & \text{ for } k=1, \\
    2^k|Cl_K^1| & \text{ for } k\geq 2.
    \end{array} \right.
  $$
Thus the ray class field of $K\!\!\mod2$ is just the big Hilbert class field of $K$.
\end{cor}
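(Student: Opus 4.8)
The plan is to obtain this corollary as a direct specialization of the ray-class-number formula in part 1) of \pref{rayclassnumber}, feeding in the unit invariants $s_{d,2}$ and $m_{d,2}$ supplied by \lref{units}. The standing hypotheses $d\equiv 5\pmod 8$ and $N(u)=-1$ place us exactly in the setting of \lref{units}, and since $a,b$ are both odd (in particular $a$ is odd), that lemma gives $m_{d,2}=2$ and $s_{d,2}=3$. Thus the only real input is to identify these two numbers correctly and substitute.

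Next I would put $p=2$, $m_{d,2}=2$, and $s_{d,2}=3$ into the two branches of the formula. For $k=1$, which lies in the range $k\le m_{d,2}$, the second branch yields
$$|Cl^{2^k}_K|=\frac{2^{2k-2}(2^2-1)}{3}\,|Cl_K^1|=|Cl_K^1|,$$
while for $k\ge 2=m_{d,2}$ the first branch yields
$$|Cl^{2^k}_K|=\frac{(2^2-1)\,2^{k-2+m_{d,2}}}{3}\,|Cl_K^1|=2^k|Cl_K^1|,$$
which is precisely the asserted value. The main thing to check—and essentially the only point that requires any care—is consistency at the boundary $k=2$: both branches return $4|Cl_K^1|$, so the piecewise description is well defined there. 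There is no genuine obstacle, as the statement is a clean arithmetic specialization once \lref{units} is invoked.

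Finally, for the closing assertion I would use the case $k=1$, which gives $|Cl_K^2|=|Cl_K^1|$. The canonical surjection $Cl_K^{2}\twoheadrightarrow Cl_K^1$ coming from the exact sequence \eqref{exactsequencequadratic} is then a bijection by order count, hence an isomorphism; therefore the ray class field of $K$ modulo $2$ coincides with the big Hilbert class field of $K$.
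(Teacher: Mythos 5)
Your proposal is correct and matches the paper's own proof: both invoke Lemma \ref{units} (with $a$ odd giving $s_{d,2}=3$, $m_{d,2}=2$) and then specialize part 1) of Proposition \ref{rayclassnumber} to $p=2$. Your added checks—the consistency of the two branches at $k=2$ and the order-counting argument identifying the ray class field mod $2$ with the big Hilbert class field—are details the paper leaves implicit, and they are correct.
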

\begin{proof}
With the above assumption, Lemma \ref{units} says $s_{d,2}=3$ and $m_{d,2}=2$. Now applying Proposition \ref{rayclassnumber} to the case $p=2$ gives the ray class numbers.
\end{proof}
\begin{proof2*}Proposition \ref{rayclassnumber} says that $|Cl^{p^{k+1}}_K|=p|Cl^{p^k}_K|$ for $k\geq m_0$, where $m_0=\nu_2(u^6-1)$ if $p=2$ and $N(u)=1$, and $m_0=m_{d,p}$ otherwise. So $R_{k+1}/R_k$ is of degree $p$. Since $\Q(\zeta_{p^{k+1}})/\Q(\zeta_p)$ is of conductor $p^k$, the conductor of $R_k(\zeta_{p^{k+1}})/R_k$ divides $p^k$ and is of degree $p$, so $R_{k+1}=R_k(\zeta_{p^{k+1}})$ for $k>\!\!>0$.

For part $b)$, the ray class field $R_1$ is the big Hilbert class field by Corollary \ref{rayclassnumbercor}. If $k\geq 2$, the conductor of $K(\zeta_{2^k})/K$ divides $2^{k-2}$, and the conductor of $K(\sqrt{2u})$ over $K$ is $2^2$, so the conductor of $H(\sqrt{2u},\zeta_{2^k})/K$ divides $2^k$. Also $[H(\sqrt{2u},\zeta_{2^k}):K]=2^k|Cl_K^1|$ which equals the ray class number by Corollary \ref{rayclassnumbercor}, so $H(\sqrt{2u},\zeta_{2^k})$ is the ray class field $R_k$ of $K$ mod $p^k$, and $R_{k+1}=R_k(\zeta_{p^{k+1}})$. \hfill$\square$
\end{proof2*}
\begin{rem}
In the case $N(u)=1$, Kitaoka also constructed the maximal real subextension of $R_1/K$ in \cite{Ki}. 
\end{rem}

\smallskip

\section{Ray class groups of $\Q(\zeta_p)$}
In this section, we will consider prime cyclotomic number fields $K=\Q(\zeta_p)$ with $p>2$. Let $\mathfrak{p}$ be the unique prime $(1-\zeta_p)$ above $p$ in the ring of integer $\mathcal{O}_K$ of $K$. For any integer $k\geq 0$, denote by $Cl_K^{\mathfrak{p}^k}$ the ray class group of $K$ modulo $\frak{m}=\mathfrak{p}^k$. Let $\mathcal{O}^*_+$, resp. $\mathcal{O}_+^{\frak{m}}$, be the group of totally positive units of $\mathcal{O}_K$, resp. of totally positive units $\equiv 1$ mod $\frak{m}$. By class field theory, one has the following exact sequence: 
\begin{equation}\label{exactsequencecyclo}
1\rightarrow\mathcal{O}^*_+/\mathcal{O}^{\frak{m}}_+\rightarrow(\mathcal{O}_K/\frak{m})^*\rightarrow Cl^{\frak{m}}_K\rightarrow Cl^{1}_K\rightarrow 1.
\end{equation}
Proposition $1.4$ of \cite{CDO} gives the order of $(\mathcal{O}_K/\frak{m})^*$, which is $|(\mathcal{O}_K/\frak{m})^*|=(p-1)p^{k-1}$. 
Denote by $\mathcal{O}^*$, resp.\ $\mathcal{O}^{\mathfrak{m}}$, the group of units of $\mathcal{O}_K$, resp.\ of units $\equiv 1$ mod $\frak{m}$. Since $K=\Q(\zeta_p)$ is totally complex and all units are totally positive, we have $\mathcal{O}^*=\mathcal{O}_+^*$, $\mathcal{O}^{\frak{m}}=\mathcal{O}_+^{\frak{m}}$ and $Cl_K^1$ is the class group $Cl_K$ of $K$. The ray class number of $K$ modulo $\frak{m}$ is $|Cl^{\frak{m}}_K|=\frac{(p-1)p^{k-1}\cdot|Cl_K|}{|\mathcal{O}^*/\mathcal{O}^{\frak{m}}|}$ by the exact sequence \ref{exactsequencecyclo}. For $2\leq i\leq \frac{p-1}{2}$, let $u_i=\frac{\zeta_p^i-1}{\zeta_p-1}$ be the cyclotomic units, so $u_i=\sum_{j=1}^i(_j^i)(\zeta_p-1)^{j-1}$. Denote by $\lambda$ the primitive $(p-1)$th root of unity in $(\Z/p)^*$. 
\begin{lem}\label{k=1}
For $k=1,2$, we have $\mathcal{O}^*/\mathcal{O}^{\mathfrak{p}^k}\cong (\mathcal{O}_K/\mathfrak{p}^k)^*$, thus $Cl_K^{\mathfrak{p}}\cong Cl_K \cong Cl_K^{\mathfrak{p}^2}$.
\end{lem}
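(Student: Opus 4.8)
The plan is to read off both isomorphisms directly from the four-term exact sequence \ref{exactsequencecyclo}. The left-hand map $\mathcal{O}^*/\mathcal{O}^{\mathfrak{p}^k}\to(\mathcal{O}_K/\mathfrak{p}^k)^*$ is injective for every modulus, so it is an isomorphism precisely when the reduction map $\mathcal{O}^*\to(\mathcal{O}_K/\mathfrak{p}^k)^*$ is \emph{surjective}. Once that is established, exactness forces the connecting homomorphism $(\mathcal{O}_K/\mathfrak{p}^k)^*\to Cl_K^{\mathfrak{p}^k}$ to be zero, so the last two terms give $Cl_K^{\mathfrak{p}^k}\cong Cl_K^1=Cl_K$. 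Thus the whole lemma reduces to proving surjectivity of reduction modulo $\mathfrak{p}$ and modulo $\mathfrak{p}^2$.

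For $k=1$: since $p$ is totally ramified, the residue field is $\F_p$ and $(\mathcal{O}_K/\mathfrak{p})^*=\F_p^*$ has order $p-1$. Here I would use the cyclotomic units: from $u_i=1+\zeta_p+\cdots+\zeta_p^{i-1}$ and $\zeta_p\equiv 1 \pmod{\mathfrak{p}}$ one gets $u_i\equiv i \pmod{\mathfrak{p}}$ for $2\leq i\leq\frac{p-1}{2}$. Hence the units $1,u_2,\dots,u_{(p-1)/2}$ reduce to the residues $1,2,\dots,\frac{p-1}{2}$, and multiplying each by the unit $-1$ produces $\frac{p+1}{2},\dots,p-1$. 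Every class in $\F_p^*$ is therefore the reduction of a unit, so the map is onto.

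For $k=2$: now $(\mathcal{O}_K/\mathfrak{p}^2)^*$ has order $(p-1)p$ and sits in a short exact sequence $1\to(1+\mathfrak{p})/(1+\mathfrak{p}^2)\to(\mathcal{O}_K/\mathfrak{p}^2)^*\to(\mathcal{O}_K/\mathfrak{p})^*\to 1$, whose kernel $(1+\mathfrak{p})/(1+\mathfrak{p}^2)\cong\mathfrak{p}/\mathfrak{p}^2\cong\Z/p$ is cyclic of order $p$. Composing the reduction of $\mathcal{O}^*$ with the projection onto $(\mathcal{O}_K/\mathfrak{p})^*$ is exactly the $k=1$ map, already shown surjective, so the image of $\mathcal{O}^*$ maps onto the quotient $\F_p^*$. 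It remains only to hit the kernel, and $\zeta_p$ does this: $\zeta_p\equiv 1\pmod{\mathfrak{p}}$ places it in the kernel, while $\nu_{\mathfrak{p}}(\zeta_p-1)=1$ shows $\zeta_p\not\equiv 1\pmod{\mathfrak{p}^2}$, so its image is a nontrivial, hence generating, element of $\Z/p$. A subgroup of $(\mathcal{O}_K/\mathfrak{p}^2)^*$ that surjects onto the quotient and contains the entire kernel must be the whole group, so reduction modulo $\mathfrak{p}^2$ is surjective as well.

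The argument is essentially formal once these two surjectivities are in hand; I expect no real obstacle beyond keeping the direction of the exact sequence straight. The single genuine input is the congruence $u_i\equiv i\pmod{\mathfrak{p}}$ in the $k=1$ step, which is what keeps the image of $\mathcal{O}^*$ from collapsing into a proper subgroup of $\F_p^*$ (the rational integers $2,\dots,p-1$ are, of course, not units in $\mathcal{O}_K$). The $k=2$ case then costs almost nothing extra, thanks to the single unit $\zeta_p$ generating the new cyclic kernel.
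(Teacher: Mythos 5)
Your proof is correct, and while it shares the paper's overall skeleton---injectivity of $\mathcal{O}^*/\mathcal{O}^{\mathfrak{p}^k}\hookrightarrow(\mathcal{O}_K/\mathfrak{p}^k)^*$ is automatic, so everything reduces to surjectivity of reduction of units, after which the exact sequence \ref{exactsequencecyclo} yields $Cl_K^{\mathfrak{p}^k}\cong Cl_K$ exactly as you say---your mechanism for proving surjectivity is genuinely different. The paper exhibits a single generating unit: with $\lambda$ a primitive root mod $p$, the cyclotomic unit $u_\lambda$ satisfies $u_\lambda\equiv\lambda\pmod{\mathfrak{p}}$ and $u_\lambda^p\equiv\lambda\pmod{\mathfrak{p}^2}$, so $\bar u_\lambda$ has order $p-1$ in $(\mathcal{O}_K/\mathfrak{p})^*\cong\F_p^*$ and order $p(p-1)$ in $(\mathcal{O}_K/\mathfrak{p}^2)^*\cong\Z/p(p-1)\Z$; since both groups are cyclic, this one unit does the whole job. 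You instead cover $\F_p^*$ by the family $\pm u_i$ (using $u_i\equiv i\pmod{\mathfrak{p}}$), and handle $k=2$ by a filtration argument: the image of $\mathcal{O}^*$ maps onto the quotient $(\mathcal{O}_K/\mathfrak{p})^*$ and contains the kernel $(1+\mathfrak{p})/(1+\mathfrak{p}^2)\cong\Z/p\Z$, which is generated by the image of $\zeta_p$ because $\nu_{\mathfrak{p}}(\zeta_p-1)=1$. What your route buys: you never need cyclicity of $(\mathcal{O}_K/\mathfrak{p}^2)^*$, and you sidestep the one delicate step in the paper's argument---passing from $u_\lambda^p\equiv\lambda\pmod{\mathfrak{p}^2}$ to ``order exactly $p(p-1)$'' requires ruling out order $p-1$ (which would force $u_\lambda\equiv\lambda\pmod{\mathfrak{p}^2}$, contradicting $u_\lambda\equiv\lambda+\binom{\lambda}{2}(\zeta_p-1)\pmod{\mathfrak{p}^2}$ with $\binom{\lambda}{2}\not\equiv 0$), a point the paper leaves implicit. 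What the paper's route buys: an explicit cyclic generator of $\mathcal{O}^*/\mathcal{O}^{\mathfrak{p}^2}\cong(\mathcal{O}_K/\mathfrak{p}^2)^*$, in the same spirit as the explicit cyclotomic-unit computations used later in Lemma \ref{even}.
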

\begin{proof}
For $k=1,2$, there is a natural inclusion $i: \mathcal{O}^*/\mathcal{O}^{\mathfrak{p}^k}\hookrightarrow(\mathcal{O}_K/\mathfrak{p}^k)^*$, where $(\mathcal{O}_K/\mathfrak{p})^*\cong \mathbb{F}_p^*$ and $(\mathcal{O}_K/\mathfrak{p}^2)^*\cong\Z/p(p-1)\Z$. Consider the cyclotomic unit $u_{\lambda}$, we know $u_{\lambda} \equiv \lambda$ mod $(\zeta_p-1)$, i.e.\ $u_{\lambda}$ has order $p-1$ mod $(\zeta_p-1)$; also we have $u_{\lambda}^p\equiv \lambda$ mod $(\zeta_p-1)^2$, so $\bar{u}_{\lambda}\in\mathcal{O}^*/\mathcal{O}^{\mathfrak{p}^2}$ has order $p(p-1)$. Thus $\mathcal{O}^*/\mathcal{O}^{\mathfrak{p}^k}\cong (\mathcal{O}_K/\mathfrak{p}^k)^*$ and $Cl_K^{\mathfrak{p}}\cong Cl_K \cong Cl_K^{\mathfrak{p}^2}$ by the exact sequence \ref{exactsequencecyclo}. 
\end{proof}
\begin{lem}\label{odd}
For any integer $k\geq 2$, if $k$ is odd or $(p-1)|k$, then $\mathcal{O}^{\mathfrak{p}^k}=\mathcal{O}^{\mathfrak{p}^{k+1}}$.
\end{lem}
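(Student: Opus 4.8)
The plan is to study the ``leading coefficient'' of a unit congruent to $1$ modulo $\mathfrak{p}^k$ and to exploit the Galois action on it. Write $\pi=1-\zeta_p$, so that $\mathfrak{p}^k/\mathfrak{p}^{k+1}\cong\mathbb{F}_p$ with basis $\pi^k$, and for a unit $\varepsilon\in\mathcal{O}^{\mathfrak{p}^k}$ define $\ell(\varepsilon)\in\mathbb{F}_p$ by $\varepsilon\equiv 1+\ell(\varepsilon)\,\pi^k \pmod{\mathfrak{p}^{k+1}}$. The assertion $\mathcal{O}^{\mathfrak{p}^k}=\mathcal{O}^{\mathfrak{p}^{k+1}}$ is exactly the statement that $\ell\equiv 0$ on $\mathcal{O}^{\mathfrak{p}^k}$. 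The one computation I would record first is the transformation rule under $\gal{K}{\Q}$: for $\sigma$ with $\sigma(\zeta_p)=\zeta_p^{a}$ one has $\sigma(\pi)\equiv a\pi\pmod{\mathfrak{p}^2}$ (since $\tfrac{1-\zeta_p^a}{1-\zeta_p}=1+\zeta_p+\cdots+\zeta_p^{a-1}\equiv a$), while $\sigma$ fixes the residue field $\mathbb{F}_p$ pointwise; hence $\ell(\sigma\varepsilon)=a^k\,\ell(\varepsilon)$, where $a=a(\sigma)$ runs over $(\Z/p)^*$ as $\sigma$ runs over the Galois group. I would then split into the two cases named in the hypothesis, which are disjoint since $p-1$ is even and so $(p-1)\mid k$ forces $k$ even.

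For the case $(p-1)\mid k$ I would use the full norm. Since every conjugate of $\varepsilon$ is $\equiv 1\pmod{\mathfrak{p}}$, the value $N_{K/\Q}(\varepsilon)\in\{\pm1\}$ is $\equiv1\pmod{\mathfrak{p}}$, forcing $N_{K/\Q}(\varepsilon)=1$. Multiplying the congruences $\sigma\varepsilon\equiv 1+a(\sigma)^k\ell(\varepsilon)\pi^k$ over all $\sigma$ and discarding the cross terms, which lie in $\mathfrak{p}^{2k}\subseteq\mathfrak{p}^{k+1}$, gives $1=N_{K/\Q}(\varepsilon)\equiv 1+\big(\sum_{a\in(\Z/p)^*}a^k\big)\ell(\varepsilon)\,\pi^k \pmod{\mathfrak{p}^{k+1}}$. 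The classical evaluation $\sum_{a}a^k\equiv -1\pmod p$ when $(p-1)\mid k$ then forces $\ell(\varepsilon)=0$.

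For the case $k$ odd the power sum above vanishes, so I would instead feed in complex conjugation, which requires a preliminary reduction: every unit $\varepsilon\equiv 1\pmod{\mathfrak{p}^k}$ with $k\geq2$ is real. To see this, set $w=\varepsilon/\bar\varepsilon$; because $K$ is an abelian CM field, complex conjugation is a well-defined central element of $\gal{K}{\Q}$ and commutes with every embedding, so every archimedean absolute value of $w$ equals $1$ and Kronecker's theorem makes $w$ a root of unity. Since $w\equiv 1\pmod{\mathfrak{p}^2}$ and the only root of unity in $\mu_K=\langle-\zeta_p\rangle$ that is $\equiv1\pmod{\mathfrak{p}^2}$ is $1$ (as $\zeta_p^j\equiv 1-j\pi$ and $-1\not\equiv1\pmod{\mathfrak{p}}$), we get $\varepsilon=\bar\varepsilon$. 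Applying the weight-$k$ relation with $a=-1$ to $\bar\varepsilon=\varepsilon$ then yields $\ell(\varepsilon)=(-1)^k\ell(\varepsilon)=-\ell(\varepsilon)$, whence $\ell(\varepsilon)=0$ because $p$ is odd.

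In either case $\ell\equiv0$, i.e.\ $\mathcal{O}^{\mathfrak{p}^k}=\mathcal{O}^{\mathfrak{p}^{k+1}}$. I expect the main obstacle to be the reality reduction in the odd case: one must invoke that complex conjugation is central in $\gal{K}{\Q}$ and that the only root of unity near $1$ is trivial, both of which are special to the cyclotomic/CM setting. By contrast, the $(p-1)\mid k$ case is a clean power-sum computation, and the weight-$k$ bookkeeping of the Galois action is routine once set up; the conceptual content is simply that the two hypotheses correspond to two genuinely different nonvanishing mechanisms—a nonvanishing power sum versus a nonvanishing sign.
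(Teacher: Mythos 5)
Your proof is correct and follows essentially the same route as the paper: in the odd case both arguments reduce to showing that a unit $\equiv 1 \pmod{\mathfrak{p}^2}$ is real (you prove this via Kronecker's theorem, while the paper invokes the standard decomposition $x=\zeta_p^{a_x}x_0$ with $x_0$ real, which rests on the same root-of-unity argument), and in the $(p-1)\mid k$ case both take $N_{K/\Q}$ of the unit and compare leading terms modulo $\mathfrak{p}^{k+1}$, your power sum $\sum_a a^k\equiv p-1\equiv -1$ being literally the paper's coefficient $(p-1)p^{k'}a\equiv -p^{k'}a$ from the binomial expansion. The only cosmetic difference is how reality is exploited: you use the equivariance $\ell(\bar\varepsilon)=(-1)^k\ell(\varepsilon)$ under complex conjugation, whereas the paper notes that real elements have even $\mathfrak{p}$-valuation; these are the same fact.
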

\begin{proof}
For any $k\geq 2$, if $x\in\mathcal{O}^{\mathfrak{p}^k}$, then $x-1\equiv 0$ mod $(\zeta_p-1)^2$. Write $x=\zeta_p^{a_x}x_0$, where $x_0\in\Q(\zeta_p+\zeta_p^{-1})$. Since $\zeta_p^{a_x}=(1+(\zeta_p-1))^{a_x}\equiv 1+a_x(\zeta_p-1)$ mod $(\zeta_p-1)^2$, we have $p|a_x$, thus $x=x_0\in\Q(\zeta_p+\zeta_p^{-1})$ is real. So $\nu_{\frak{p}}(x-1)$ is even, i.e. $\mathcal{O}^{\mathfrak{p}^k}=\mathcal{O}^{\frak{p}^{k+1}}$ for $k$ is odd. For any $k=k'(p-1)\geq (p-1)$, if $x\in\mathcal{O}^{\mathfrak{p}^k}$, then $x\equiv 1$ mod $p^{k'}$. Since every element of $\Z[\zeta_p]$ is congruent to a rational integer modulo $\mathfrak{p}$, we can write $x=1+p^{k'}a +p^{k'}y(\zeta_p-1)$, where $y\in\Z[\zeta_p]$ and $a\in \Z$. Then $1=N(x)\equiv (1+p^{k'}a)^{p-1}\equiv 1+(p-1)p^{k'}a\equiv 1-p^{k'}a$ mod $\mathfrak{p}^{k+1}$, so $(\zeta_p-1)|a$ and $x\equiv 1$ mod $\mathfrak{p}^{k+1}$; thus $\mathcal{O}^{\frak{p}^k}=\mathcal{O}^{\frak{p}^{k+1}}$. 
\end{proof}
\begin{lem}\label{even}
Assume $p\nmid |Cl_{\Q(\zeta_p+\zeta_p^{-1})}|$. Then for any even integer $2\leq k\leq p-3$, we have 
$\mathcal{O}^{\frak{p}^{k+\gamma(p-1)}}/\mathcal{O}^{\frak{p}^{k+1+\gamma(p-1)}}\cong \Z/p\Z$ if $\gamma \geq \gamma_k$; and $\mathcal{O}^{\frak{p}^{k+\gamma(p-1)}}/\mathcal{O}^{\frak{p}^{k+1+\gamma(p-1)}}=\{1\}$ if $0\leq \gamma< \gamma_k$, where $\gamma_k\geq 0$ is the smallest integer such that $p^{2\gamma_k+1}\nmid B_{kp^{\gamma_k}}$. Thus if $p$ is regular, we have $\mathcal{O}^{\mathfrak{p}^{k'}}/\mathcal{O}^{\mathfrak{p}^{k'+1}}\cong \Z/p$ for any even integer $k'$ such that $(p-1)\nmid k'$.  
\end{lem}
\begin{proof}
Let $E_{k}^{(N)}=\prod_{i=1}^{p-1}(\zeta_p^{\frac{1-\lambda}{2}}\cdot\frac{1-\zeta_p^{\lambda}}{1-\zeta_p})^{\omega_N(i)^{k}\sigma_i^{-1}}$ as in section $8.3$ of \cite{Wa}, where $\omega_N(i)\equiv \omega(i)$ mod $p^N$, $\omega$ is the Teichm\"{u}ller character and $\sigma_i(\zeta_p)=\zeta_p^i$. By Proposition $8.12$ of \cite{Wa}, we have $\nu_\frak{p}(\log_p(E_{k}^{(N)}))=k+(p-1)\nu_{p}(L_p(1,\omega^k))$ for $N\geq 1+\nu_{p}(L_p(1,\omega^k))$. By Theorems $5.11$ and $5.12$ in \cite{Wa}, for any $\gamma\geq 0$ we have $L_p(1,\omega^{kp^{\gamma}})\equiv L_p(1-kp^{\gamma},\omega^{kp^{\gamma}})=-(1-p^{kp^{\gamma}-1})\frac{B_{kp^{\gamma}}}{kp^{\gamma}}\equiv -\frac{B_{kp^{\gamma}}}{kp^{\gamma}}$ mod $p^{\gamma+1}$. So $\gamma_k=\nu_{p}(L_p(1,\omega^{k}))$. Write $E_{k}^{(1+\gamma_k)}=a_{k}+b_{k}(\zeta_p-1)^{k+\gamma_k(p-1)}$ mod $\mathfrak{p}^{k+2+\gamma_k(p-1)}$ with $p\nmid a_{k}b_{k}$. Let $1\leq a_{k}^{-1}\leq q-1$ be the inverse of $a_{k}$ in $(\Z/p)^*$; then $u_{a_{k}^{-1}}^p\equiv a_{k}^{-1}$ mod $p$. Define $x_{k}=E_{k}^{(1+\gamma_k)}\cdot u_{a_{k}^{-1}}^p$; then $x_{k}\equiv 1+a_{k}^{-1}b_{k}(\zeta_p-1)^{k+\gamma_k(p-1)}$ mod $\mathfrak{p}^{k+2+\gamma_k(p-1)}$, i.e. $x_{k}\in\mathcal{O}^{\frak{p}^{k+\gamma_k(p-1)}}-\mathcal{O}^{\frak{p}^{k+1+\gamma_k(p-1)}}$. Thus $x_{k}^{\gamma-\gamma_k}\in\mathcal{O}^{\frak{p}^{k+\gamma(p-1)}}-\mathcal{O}^{\frak{p}^{k+1+\gamma(p-1)}}$ for any $\gamma\geq \gamma_k$. So $\mathcal{O}^{\frak{p}^{k+\gamma(p-1)}}\neq\mathcal{O}^{\frak{p}^{k+1+\gamma(p-1)}}$ for any $\gamma\geq\gamma_k$. On the other hand, we have a natural injection 
$\mathcal{O}^{\mathfrak{p}^{k+\gamma(p-1)}}/\mathcal{O}^{\mathfrak{p}^{k+1+\gamma(p-1)}}\hookrightarrow \frac{1+\frak{p}^{k+\gamma(p-1)}}{1+\mathfrak{p}^{k+1+\gamma(p-1)}}\cong \Z/p$. So $\frac{\mathcal{O}^{\mathfrak{p}^{k+\gamma(p-1)}}}{\mathcal{O}^{\mathfrak{p}^{k+1+\gamma(p-1)}}}\cong \Z/p$ for $\gamma\geq \gamma_k$. When $\gamma<\gamma_k$, the condition $p\nmid|Cl_{\Q(\zeta_p+\zeta_p^{-1})}|$ implies the $E_i^{(1)}$'s generate $\mathcal{O}^*/(\mathcal{O}^*)^p$ where $i$ runs through all even integers in $[2,p-3]$. So $\frac{\mathcal{O}^{\mathfrak{p}^{k+\gamma(p-1)}}}{\mathcal{O}^{\mathfrak{p}^{k+1+\gamma(p-1)}}}=\{1\}$ for $\gamma<\gamma_k$. If $p$ is regular, then $p\nmid B_k$, i.e. $\gamma_k=0$ for any even integer $2\leq k\leq p-3$, and the conclusion follows.
\end{proof}

\vspace{-0.5em}
Combining Lemmas \ref{k=1}, \ref{odd} and \ref{even}, we can give the order of the ray class groups in the case $p$ is regular.
\begin{prop}\label{classnumber}
For an odd regular prime $p$, the ray class number of $K=\Q(\zeta_p)$ mod $\mathfrak{m}=(\zeta_p-1)^k$ is:
\vspace{-1em}
$$|Cl^{\frak{m}}_K|=\left\{ \begin{array}{cc}
    |Cl_K| & \text{ if } k\leq 2, \\
    |Cl_K|\cdot p^{[\frac{k}{2}]+[\frac{k-1}{p-1}]-1} & \text{ if } k\geq 3.
    \end{array} \right.
  $$
\end{prop}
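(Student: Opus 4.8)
The plan is to reduce everything to computing the unit index $N_k := |\mathcal{O}^*/\mathcal{O}^{\mathfrak{p}^k}|$ and then read off the ray class number from the exact sequence \ref{exactsequencecyclo}. Since $|(\mathcal{O}_K/\mathfrak{p}^k)^*| = (p-1)p^{k-1}$ is already recorded, the four-term exact sequence gives $|Cl^{\mathfrak{m}}_K| = (p-1)p^{k-1}|Cl_K|/N_k$, so the entire task is to determine $N_k$.

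For $k\leq 2$, Lemma \ref{k=1} asserts that the natural inclusion $\mathcal{O}^*/\mathcal{O}^{\mathfrak{p}^k}\hookrightarrow(\mathcal{O}_K/\mathfrak{p}^k)^*$ is an isomorphism, hence $N_k=(p-1)p^{k-1}$ and $|Cl^{\mathfrak{m}}_K|=|Cl_K|$, which is the first line of the formula. For $k\geq 3$, I would telescope the descending chain $\mathcal{O}^*\supseteq\mathcal{O}^{\mathfrak{p}^2}\supseteq\cdots\supseteq\mathcal{O}^{\mathfrak{p}^k}$: writing $N_k=N_2\cdot\prod_{j=2}^{k-1}|\mathcal{O}^{\mathfrak{p}^j}/\mathcal{O}^{\mathfrak{p}^{j+1}}|$ with $N_2=(p-1)p$ again from Lemma \ref{k=1}. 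Now Lemmas \ref{odd} and \ref{even} evaluate each factor in the regular case: the $j$-th quotient is trivial when $j$ is odd or $(p-1)\mid j$, and is $\cong\Z/p$ precisely when $j$ is even with $(p-1)\nmid j$. Hence $\prod_{j=2}^{k-1}|\mathcal{O}^{\mathfrak{p}^j}/\mathcal{O}^{\mathfrak{p}^{j+1}}|=p^M$, where $M=\#\{j : 2\leq j\leq k-1,\ j \text{ even},\ (p-1)\nmid j\}$.

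The last step is to evaluate $M$ combinatorially. Since $p-1$ is even, every multiple of $p-1$ is automatically even, so subtracting the multiples of $p-1$ in $[2,k-1]$ from the even integers in $[2,k-1]$ yields $M=[\frac{k-1}{2}]-[\frac{k-1}{p-1}]$. Substituting $N_k=(p-1)p^{1+M}$ into the index formula gives $|Cl^{\mathfrak{m}}_K|=|Cl_K|\,p^{\,k-2-M}$, and the elementary identity $[\frac{k-1}{2}]+[\frac{k}{2}]=k-1$ rewrites the exponent as $k-2-M=[\frac{k}{2}]+[\frac{k-1}{p-1}]-1$, matching the claimed second line.

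There is no genuine obstacle remaining at this stage, since all the analytic content is carried by the three lemmas (in particular the regularity hypothesis enters only through Lemma \ref{even} via $\gamma_k=0$). The only points requiring care are the endpoints of the telescoping range and the floor-function bookkeeping: in particular the factor $j=2$ is handled uniformly by the count, giving a $\Z/p$ when $p\geq 5$ but a trivial contribution when $p=3$, where $2$ is itself a multiple of $p-1$.
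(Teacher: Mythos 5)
Your proposal is correct and follows essentially the same route as the paper: both rest on the exact sequence \ref{exactsequencecyclo} together with Lemmas \ref{k=1}, \ref{odd} and \ref{even}, the only cosmetic difference being that you telescope the unit index $[\mathcal{O}^*:\mathcal{O}^{\mathfrak{p}^k}]$ directly while the paper telescopes the ratio $|Cl_K^{\mathfrak{p}^{k+1}}|/|Cl_K^{\mathfrak{p}^k}|=p/[\mathcal{O}^{\mathfrak{p}^k}:\mathcal{O}^{\mathfrak{p}^{k+1}}]$. Your explicit floor-function count $M=[\frac{k-1}{2}]-[\frac{k-1}{p-1}]$ (including the $p=3$ endpoint remark) is exactly the bookkeeping the paper leaves implicit in the phrase ``the conclusion directly comes from Lemma \ref{odd} and Lemma \ref{even}.''
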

\begin{proof}
When $k\leq 2$, this is just Lemma \ref{k=1}. For $k\geq 3$, we know $|Cl_K^{\mathfrak{p}^{k+1}}/Cl_K^{\mathfrak{p}^{k}}|=p/{[\mathcal{O}^{\frak{p}^k}:\mathcal{O}^{\frak{p}^{k+1}}]}$ by the exact sequence \ref{exactsequencecyclo}. The conclusion directly comes from Lemma \ref{odd} and Lemma \ref{even}.
\end{proof}
\begin{rem}\label{divisibility}
The above proposition shows the ray class number of $K=\Q(\zeta_p)$ mod $(1-\zeta_p)^k$ for an odd regular prime $p$ and any $k\in\mathbb{N}$ is a product of a power of $p$ (possibly $1$) and $|Cl_K|$. In fact the statement is true even without the condition $p$ is regular: by Lemma \ref{k=1} we have $p-1||\mathcal{O}^*/\mathcal{O}^{\frak{p}^k}|$; combining with the exact sequence \ref{exactsequencecyclo} we get the same conclusion. 
\end{rem}
Denote by $\hat{\mathcal{O}}$ and $\hat{\mathfrak{p}}$ the valuation ring in the $p$-adic completion $\Q_p(\zeta_p)$ and its maximal ideal. Denote by $\hat{\mathcal{O}}^*$, resp.\ $\hat{U}^{(k)}$, the group of units, resp.\ the group of units $\equiv 1$ mod $\hat{\mathfrak{p}}^k$ in $\hat{\mathcal{O}}$. The next proposition will give the $p$-rank of the ray class group, i.e.\ the minimal number of generators for the $p$-part of the ray class group.
\begin{prop}\label{rank}
For any odd prime $p$ and any integer $k\gg 0$, The $p$-rank of the ray class group of $K=\mathbb{Q}(\zeta_p)$ mod $\mathfrak{m}=(\zeta_p-1)^{k}$ is $(p+1)/2$.
\end{prop}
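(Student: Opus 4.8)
The plan is to compute the $p$-rank of $Cl_K^{\mathfrak{m}}$ via the exact sequence \ref{exactsequencecyclo}, translating everything into the structure of $p$-adic units where explicit generators are available. For $k \gg 0$, the prime-to-$p$ part of $(\mathcal{O}_K/\mathfrak{m})^*$ contributes nothing to the $p$-rank, so the $p$-rank of $Cl_K^{\mathfrak{m}}$ is governed by the $p$-rank of the cokernel of the map $\mathcal{O}^*/\mathcal{O}^{\mathfrak{m}} \to (\mathcal{O}_K/\mathfrak{m})^*$ together with the contribution from $Cl_K$. Since $K=\Q(\zeta_p)$ is totally complex with unit rank $\frac{p-1}{2}$, the group $\mathcal{O}^*$ has $\frac{p-1}{2}$ fundamental units (modulo roots of unity), and I would track how many independent relations they impose on the $p$-part of $(\mathcal{O}_K/\mathfrak{m})^*$.

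First I would pass to the $p$-adic completion: the natural map $(\mathcal{O}_K/\mathfrak{p}^k)^* \to \hat{\mathcal{O}}^*/\hat{U}^{(k)}$ is an isomorphism on the $p$-part for $k \geq 2$, and the principal units $1 + \hat{\mathfrak{p}}$ form a finitely generated $\Z_p$-module of rank $p-1$ (the local degree $[\Q_p(\zeta_p):\Q_p] = p-1$). The image of the global units $\mathcal{O}^*$ inside $\hat{\mathcal{O}}^*$ lands in the principal units after the finite prime-to-$p$ part is stripped, and the $\Z_p$-rank of the closure of this image is exactly $\frac{p-1}{2}$ (one generator per independent cyclotomic unit), using the fact that the cyclotomic units $u_i$ for $2 \leq i \leq \frac{p-1}{2}$ generate a finite-index subgroup of $\mathcal{O}^*$. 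Thus for $k \gg 0$ the $p$-rank of the quotient $(1+\hat{\mathfrak{p}})/\overline{\mathcal{O}^*}$ stabilizes at $(p-1) - \frac{p-1}{2} = \frac{p-1}{2}$.

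Then I would assemble the count. The $p$-part of $(\mathcal{O}_K/\mathfrak{m})^*$ modulo the image of the units contributes $p$-rank $\frac{p-1}{2}$ by the previous step; the class group $Cl_K$, being killed by $p$ at the top of the sequence \ref{exactsequencecyclo}, contributes its own $p$-rank, but for the generic count I would instead read off the one extra cyclic factor appearing from the tame part $(\mathcal{O}_K/\mathfrak{p})^* \cong \F_p^*$ that survives as a $\Z/p$ after accounting for the Teichmüller lift handled in Lemma \ref{k=1}. Adding the stabilized local contribution $\frac{p-1}{2}$ to the single additional generator gives $\frac{p-1}{2} + 1 = \frac{p+1}{2}$, which is the asserted rank.

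The main obstacle I expect is making precise the claim that the image of the global units has full $p$-adic rank $\frac{p-1}{2}$ in the principal units and that this image is \emph{saturated} enough for $k \gg 0$ so that the cokernel rank stabilizes rather than continuing to grow. This is essentially a $p$-adic regulator (Leopoldt-type) statement for $\Q(\zeta_p)$, and I would lean on the explicit logarithmic computations with the units $E_k^{(N)}$ from Lemma \ref{even} — specifically the nonvanishing of $\nu_{\mathfrak{p}}(\log_p(E_k^{(N)}))$ for each relevant even $k$ — to guarantee that the $\frac{p-1}{2}$ units remain $\Z_p$-independent in the completion, so that exactly $\frac{p-1}{2}$ independent directions in $1+\hat{\mathfrak{p}}$ are filled and the remaining $\frac{p+1}{2}$ survive in the quotient.
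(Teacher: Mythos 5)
Your overall strategy---pass to the $p$-adic completion, compute the $\Z_p$-rank of the local principal units, subtract the rank of the closure of the global units, and invoke a Leopoldt-type statement---is exactly the paper's route (the paper takes the projective limit over $k$ of the sequence \ref{exactsequencecyclo} rather than working at a fixed $k\gg 0$, but that difference is cosmetic). However, your rank bookkeeping contains two errors that happen to cancel, so the argument as written is not correct even though the final number is. First, the unit rank of $\Q(\zeta_p)$ is not $\frac{p-1}{2}$: by Dirichlet's theorem it is $r_1+r_2-1 = 0+\frac{p-1}{2}-1 = \frac{p-3}{2}$. (Your own list of cyclotomic units $u_i$, $2\leq i\leq \frac{p-1}{2}$, has $\frac{p-3}{2}$ members, not $\frac{p-1}{2}$.) With the correct count, and with Leopoldt's conjecture (known for abelian fields; the paper cites Corollary $5.32$ of \cite{Wa}) guaranteeing that the closure of the global units keeps full $\Z_p$-rank $\frac{p-3}{2}$ in the completion, the cokernel inside $\hat{U}^{(2)}\cong\Z_p^{p-1}$ already has $\Z_p$-rank $(p-1)-\frac{p-3}{2}=\frac{p+1}{2}$. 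No further generator is needed or available.

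Second, the ``one extra cyclic factor from the tame part $(\mathcal{O}_K/\mathfrak{p})^*\cong\mathbb{F}_p^*$'' that you add to reach $\frac{p+1}{2}$ cannot exist: $\mathbb{F}_p^*$ has order $p-1$, prime to $p$, so it contributes nothing to the $p$-rank. The only genuine extra $p$-torsion in the local units is $\mu_p\subset 1+\hat{\mathfrak{p}}$, and it is cancelled because $\mu_p$ also lies in the global units $\mathcal{O}^*$---a cancellation the paper performs explicitly when it writes $\lim_{\leftarrow}\mathcal{O}^*/\mathcal{O}^{\mathfrak{m}}\cong \Z/(p-1)\Z\times\Z/p\Z\times\Z_p^{\frac{p-3}{2}}$. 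So your total $\frac{p+1}{2}$ arises as $\frac{p-1}{2}+1$ with both summands wrong. A further, smaller gap: nonvanishing of the individual valuations $\nu_{\mathfrak{p}}(\log_p(E_k^{(N)}))$ from Lemma \ref{even} does not by itself give $\Z_p$-linear independence of the units; independence is a regulator (determinant) nonvanishing statement. One can in fact deduce it from the nonvanishing of all $L_p(1,\omega^k)$ via the eigenspace decomposition, but that argument needs to be made; the cleaner fix is to cite Leopoldt for $\Q(\zeta_p)$ directly, as the paper does.
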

\begin{proof}
We will show that the $p$-part of the ray class group has rank $(p+1)/2$ when $k\rightarrow \infty$. Taking the projective limit of the exact sequence \ref{exactsequencecyclo}, we get
$$1\rightarrow \lim_{\leftarrow}\mathcal{O}^*/\mathcal{O}^{\frak{m}}\rightarrow \lim_{\leftarrow}(\mathcal{O}_K/\frak{m})^*\rightarrow \lim_{\leftarrow} Cl^{\frak{m}}_K\rightarrow Cl^{1}_K\rightarrow 1,\hspace{2em} (*),$$
On the one hand, we have 
$$\lim_{\leftarrow} (\mathcal{O}_K/\mathfrak{p}^k)^*\cong \lim_{\leftarrow}(\hat{\mathcal{O}}/\hat{\mathfrak{p}}^k)^*\cong \lim_{\leftarrow}\hat{\mathcal{O}}^*/\hat{U}^{(k)}\cong \hat{\mathcal{O}}^*=(\Z_p(\zeta_p))^*.$$ 
In the discrete valuation ring, the units $(\Z_p(\zeta_p))^*=\Z/(p-1)\Z\times \Z/p\Z\times U^{(2)}$. When $k=2$, we have the topological isomorphism of $\Z_p$-modules, $\log: \hat{U}^{k}\longrightarrow \hat{\mathfrak{p}}^k\cong\hat{\mathcal{O}}.$
Since $\hat{\mathcal{O}}$ admits an integral basis $\alpha_1,\dots, \alpha_{p-1}$, i.e.\ $\hat{\mathcal{O}}\cong\Z_p\alpha_1\oplus\cdots\oplus\Z_p\alpha_{p-1}\cong\Z_p^{p-1}$, we have $\hat{U}^{(2)}\cong \Z_p^{p-1}$. Thus the projective limit $\lim_{\leftarrow}(\mathcal{O}_K/\mathfrak{m})^*\cong \Z/(p-1)\Z\oplus\Z/p\Z\oplus\Z_p^{p-1}$.
On the other hand, $\lim_{\leftarrow} \mathcal{O}^*/\mathcal{O}^\mathfrak{m}=\lim_{\leftarrow}\mathcal{O}^*/\mathcal{O}^\mathfrak{p}\times\lim_{\leftarrow}\mathcal{O}^{\mathfrak{p}}/\mathcal{O}^\mathfrak{m}$. By Lemma \ref{k=1}, we have $\lim_{\leftarrow}\mathcal{O}^*/\mathcal{O}^{\mathfrak{p}}\cong\Z/(p-1)\Z$, and $\lim_{\leftarrow}\mathcal{O}^{\mathfrak{p}}/\mathcal{O}^\mathfrak{m}$ is the pro-$p$-completion $\mathcal{O}^{\mathfrak{p}}\otimes\Z_p$. Since Leopoldt's conjecture is true for cyclotomic fields (see Corollary $5.32$ of \cite{Wa}), we know that the $\Z_p$-rank of the closure of the global principal units $\mathcal{O}^{\mathfrak{p}}$ in the local units $(\Z_p[\zeta_p])^*$ is $\frac{p-3}{2}$. Together with the $p$-th roots of unity in $\mathcal{O}^{\mathfrak{p}}$, we have $$\lim_{\leftarrow} \mathcal{O}^*/\mathcal{O}^{\mathfrak{m}}\cong \Z/(p-1)\Z\times\Z/p\Z\times \Z_p^{\frac{p-3}{2}}.$$ 
Therefore from the exact sequence $(*)$, we get $Cl_K^{\mathfrak{m}}\cong Cl_K\times \Z_p^{(p+1)/2}$.
\end{proof}
We now focus on the explicit structure of the ray class groups.
\begin{lem}\label{groupstructureinitial}
Assume $p$ is a regular odd prime. For $3\leq k\leq p$, the ray class group $Cl_K^{\mathfrak{p}^k}\cong Cl_K\times (\Z/p\Z)^{[\frac{k}{2}]-1}$; and for $k=p$, the ray class group $Cl_K^{\mathfrak{p}^p}\cong (\Z/p\Z)^{\frac{p-1}{2}}$.
\end{lem}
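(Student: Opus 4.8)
The plan is to reduce the \emph{structural} assertion to the order computation already carried out in \pref{classnumber}, by showing that throughout the range $k\le p$ the ambient $p$-group is elementary abelian, so that it is determined by its order alone.

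The first and main step I would carry out is a purely local computation: for $2\le k\le p$ the $p$-Sylow subgroup of $(\mathcal{O}_K/\mathfrak{p}^k)^*$, namely the principal units $1+\mathfrak{p}$ read modulo $\mathfrak{p}^k$, is elementary abelian and isomorphic to $(\Z/p\Z)^{k-1}$. To see this, take any $x\equiv 1 \bmod \mathfrak{p}$ and set $j=\nu_{\mathfrak{p}}(x-1)\ge 1$. Expanding $x^p-1=\sum_{i=1}^{p}\binom{p}{i}(x-1)^i$ and using $\nu_{\mathfrak{p}}(p)=p-1$ (since $p$ equals, up to a unit, $(1-\zeta_p)^{p-1}$), the term $i=1$ has valuation $(p-1)+j$, the terms $2\le i\le p-1$ have valuation at least $(p-1)+2$, and the term $i=p$ has valuation $pj$. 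Hence $\nu_{\mathfrak{p}}(x^p-1)\ge\min\{(p-1)+j,\,pj\}\ge p$, so $x^p\equiv 1\bmod\mathfrak{p}^k$ for every $k\le p$. Thus every principal unit has order dividing $p$ modulo $\mathfrak{p}^k$; as this Sylow subgroup has order $p^{k-1}$, it must be $(\Z/p\Z)^{k-1}$.

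Next I would feed this into the exact sequence \ref{exactsequencecyclo}. Because $p$ is regular, $p\nmid|Cl_K|$, so the sequence separates into its $p$-part and its prime-to-$p$ part. The prime-to-$p$ part of $Cl_K^{\mathfrak{p}^k}$ is exactly $Cl_K$, since the image of the global units already exhausts the cyclic factor $\Z/(p-1)\Z$ (the cyclotomic unit $u_\lambda$ reduces to a generator mod $\mathfrak{p}$, as in \lref{k=1}). The $p$-part of $Cl_K^{\mathfrak{p}^k}$ is then identified with the quotient of $(\Z/p\Z)^{k-1}$ by the image of the units. A quotient of an elementary abelian $p$-group is again elementary abelian, so the $p$-part is $(\Z/p\Z)^{r}$ for some $r$, and consequently $Cl_K^{\mathfrak{p}^k}\cong Cl_K\times(\Z/p\Z)^{r}$, the two factors splitting off because their orders are coprime.

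Finally, since the group is elementary abelian, the rank $r$ equals the $p$-adic valuation of $|Cl_K^{\mathfrak{p}^k}|/|Cl_K|$, which \pref{classnumber} computes as $[\frac{k}{2}]+[\frac{k-1}{p-1}]-1$. For $3\le k\le p-1$ the second floor vanishes and $r=[\frac{k}{2}]-1$, giving the first isomorphism; at $k=p$ the divisibility $(p-1)\mid(k-1)$ forces $[\frac{k-1}{p-1}]=1$, so $r=\frac{p-1}{2}$, which accounts for the jump in the $k=p$ case. The one point needing care is the borderline level $j=1$ in the local step, where the $i=1$ and $i=p$ terms both sit at valuation exactly $p$; but since I only need $\nu_{\mathfrak{p}}(x^p-1)\ge p$ rather than its precise value, any cancellation there is harmless, and this mild technical point is the only genuine obstacle I anticipate.
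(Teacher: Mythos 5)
Your proof is correct and follows essentially the same route as the paper's: both arguments reduce the structural claim to the order formula of Proposition~\ref{classnumber} by showing that the $p$-part of $(\mathcal{O}_K/\mathfrak{p}^k)^*$ is elementary abelian when $k\leq p$, so that its quotient, the $p$-part of $Cl_K^{\mathfrak{p}^k}$, is determined by its order alone. The only difference is cosmetic: the paper cites the standard filtration of local units ($\hat{U}^{(2)}/\hat{U}^{(k)}\cong(\Z/p\Z)^{k-2}$ for $k\leq p$), whereas you verify the exponent-$p$ property directly by the binomial valuation estimate, making the local step self-contained.
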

\begin{proof}
{F}rom the exact sequence \ref{exactsequencecyclo}, we have $Cl_K^{\mathfrak{m}}\cong \frac{(\mathcal{O}/\frak{m})^*}{\mathcal{O}^*/\mathcal{O}^{\frak{m}}}\times Cl_K$, where $\frak{m}=\frak{p}^k$. So the $p$-part of the ray class group is a quotient of $(\mathcal{O}/\frak{m})^*$. We have $(\mathcal{O}/\frak{m})^*\cong (\hat{\mathcal{O}}/\hat{\frak{m}})^*\cong \hat{\mathcal{O}}^*/\hat{U}^{(k)}\cong\Z/(p-1)\Z\times\Z/p\Z\times\hat{U}^{(2)}/\hat{U}^{(k)}$. For $k\leq p$, we know $\hat{U}^{(2)}/\hat{U}^{(k)}\cong(\Z/p\Z)^{k-2}$. So $(\mathcal{O}/\frak{m})^*\cong \Z/(p-1)\Z\times(\Z/p\Z)^{k-1}$. Thus Proposition \ref{classnumber} gives the conclusion.
\end{proof}
Next we will use Propositions \ref{classnumber}, \ref{rank} and Lemma \ref{groupstructureinitial} to prove Theorem \ref{groupstructure} in the introduction. 
\begin{proof3*}
For $k\leq p$, the conclusion is just Lemma \ref{groupstructureinitial}. Now assume $k>p$. On the one hand, we know that $(\mathcal{O}_K/\mathfrak{m})^*\cong \Z/(p-1)\Z\times(\Z/p\Z)\times \hat{U}^{(2)}/\hat{U}^{(k)}\cong \Z/(p-1)\Z\times\Z/p\Z\times(\Z/p^m\Z)^{p-1-k_0}\times(\Z/p^{m+1}\Z)^{k_0}$; on the other hand, $\mathcal{O}^*/\mathcal{O}^{\mathfrak{m}}\cong \mu_p\times\Z/(p-1)\Z\times \mathcal{O}^{\mathfrak{p}^2}/\mathcal{O}^{\mathfrak{p}^k}$. Pick any $x\in\mathcal{O}^{\mathfrak{p}^2}$, define $v_x=\nu_{\mathfrak{p}}(x-1)$; so $v_x\geq 2$. Then $\nu_{\frak{p}}(x^{p^{m+1}}-1)=(m+1)(p-1)+v_x\geq k$, i.e.\ $\bar{x}$ has order dividing $p^{m+1}$ in $\mathcal{O}^{\mathfrak{p}^2}/\mathcal{O}^{\mathfrak{p}^k}$. If there exists an element $\bar{y}\in\mathcal{O}^{\mathfrak{p}^2}/\mathcal{O}^{\mathfrak{p}^k}$ which has order less than $p^m$, then $v_y+(m-1)(p-1)=\nu_{\frak{p}}(y^{p^{m-1}}-1)\geq k$, so $v_y\geq p$. Thus $y$ is a unit and $y\equiv 1$ mod $p$, by Theorem $5.36$ of \cite{Wa}, we have $y=y_0^p$ for some $y_0\in\mathcal{O}^{\frak{p}^2}$. So $\mathcal{O}^{\mathfrak{p}^2}/\mathcal{O}^{\mathfrak{p}^k}$ is a product of copies of $\Z/p^m$ and $\Z/p^{m+1}$. By Proposition \ref{classnumber}, we can also get the order of $\mathcal{O}^{\mathfrak{p}^2}/\mathcal{O}^{\mathfrak{p}^k}$, by the exact sequence \ref{exactsequencecyclo} the structure: 
$$\mathcal{O}^{\mathfrak{p}^2}/\mathcal{O}^{\mathfrak{p}^k}\cong\left\{ \begin{array}{cc}
    (\Z/p^{m+1}\Z)^{k_0-[\frac{k_0}{2}]}\times(\Z/p^m\Z)^{\frac{p-3}{2}-k_0+[\frac{k_0}{2}]} & \text{ if } k_0\neq p-2, \\
    (\Z/p^{m+1}\Z)^{\frac{p-3}{2}} & \text{ if } k_0=p-2.
    \end{array} \right.
  $$
Notice $Cl_K^{\mathfrak{p}^p}\subset Cl_K^{\mathfrak{p}^k}$, so the $p$-rank of the $p$-part of the ray class group is no less than $\frac{p+1}{2}$, thus is exactly $\frac{p+1}{2}$ by Proposition \ref{rank}, which forces the ray class groups to have the structure as in the conclusion.\hfill$\square$ 
\end{proof3*}
\begin{rem}\label{data}
Using \cite{PARI2}, we compute the ray class groups of some cyclotomic number fields as in Table \ref{rayclassgroups}, which are consistent with the propositions above. 
\end{rem}
{\setlength{\tabcolsep}{0.06cm}
\renewcommand{\arraystretch}{1.4}
\begin{table}[!h]
\vspace{-1em}
\begin{center}
\begin{tabular}{|c|c|c|c|c|}
\hline
$K$ & $\mathfrak{p}$ & $Cl_K$ & $Cl^{p}_K$ & $Cl^{p^2}_K$ \\
\hline \hline
$\Q(\zeta_{23})$ & $23\mathcal{O}_{K}$ & $\Z/3$ & $\Z/(3\!\cdot\!23)\!\!\times\!\!(\Z/23)^9$ & $\Z/(3\!\cdot\!23^2)\!\!\times\!\!(\Z/23^2)^9\!\!\times\!\!(\Z/23)^2$ \\
\hline
$\Q(\zeta_{29})$ & $29\mathcal{O}_{K}$ & $(\Z/2)^3$ & $(\Z/2\!\cdot\!29)^3\!\!\times\!\!(\Z/29)^{10}$ & $(\Z/2\!\cdot\!29^2)^3\!\!\times\!\!(\Z/29^2)^{10}\!\times\!(\Z/29)^2$ \\
\hline
$\Q(\zeta_{31})$ & $31\mathcal{O}_{K}$ & $\Z/9$ & $(\Z/9\!\cdot\!31)\!\!\times\!\!(\Z/31)^{13}$ & $(\Z/9\!\cdot\!31^2)\!\!\times\!\!(\Z/31^2)^{13}\!\!\times\!\!(\Z/31)^2$ \\
\hline
$\Q(\zeta_{37})$ &  $37\mathcal{O}_{K}$ & $\Z/37$ & $\Z/(37^2)\!\!\times\!\!(\Z/37)^{17}$ & $(\Z/37^3)\!\!\times\!\!(\Z/37^2)^{16}\!\!\times\!\!(\Z/37)^3$\\
\hline
$\Q(\zeta_{41})$ & $41\mathcal{O}_{K}$ & $(\Z/11)^2$ & $(\Z/41\!\cdot\!11)^2\!\!\times\!\!(\Z/41)^{17}$ & $(\Z/41^2\!\cdot\! 11)^2\!\!\times\!\!(\Z/41^2)^{17}\!\!\times\!(\!\Z/41)^2$ \\
\hline
$\Q(\zeta_{43})$ & $43\mathcal{O}_{K}$ & $\Z/211$ &  $(\Z/211\!\cdot\!\!43)\!\!\times\!\!(\Z/43)^{19}$ & $\Z/(211\!\cdot\!\!43^2)\!\!\times\!\!(\Z/43^2)^{19}\!\!\times\!\!(\Z/43)^2$ \\
\hline
$\Q(\zeta_{47})$ & $47\mathcal{O}_{K}$ & $\Z/(5\!\cdot\!\!139)$ & $(\Z/5\!\cdot\!\!139\!\cdot\!\!47)\!\!\times\!\!(\Z/47)^{21}$ & $\Z/(5\!\cdot\!\!139\!\cdot\!\!47^2)\!\!\times\!\!(\Z/47^2)^{21}\!\!\!\times\!\!(\Z/47)^2$\\
\hline
\end{tabular}
\caption{Ray class groups of cyclotomic number fields}\label{rayclassgroups}
\end{center}
\vspace{-1em}
\end{table}}

As an application of the ray class groups, we consider extensions $K/\Q$ with meta-abelian Galois group $G$, i.e.\ $G^{''}$ is trivial. Next we will give the proof of Corollary \ref{meta-abelian} in the introduction, which is very similar to a statement discovered independently in \cite{JP}.
\begin{proof4*}
Consider the fixed field $K'$ of $G'$ in $K/\Q$. Since $G/G'$ is abelian, and $K'$ is tamely ramified and ramified only at $p$, we know $K'$ is contained in $\Q(\zeta_p)$. Since $K/K'$ is abelian and ramified only at $p$, we know $K$ is contained in a ray class field $R$ of $\Q(\zeta_p)$ for a modulus $\mathfrak{m}=(1-\zeta_p)^k$ for some $k$. From Remark \ref{divisibility}, we know that $|Cl_{\Q(\zeta_p)}^{\frak{m}}|$ is a product of a power of $p$ and the class number of $\Q(\zeta_p)$. By class field theory $R/\Q(\zeta_p)$ is of degree $|Cl_{\Q(\zeta_p)}^{\frak{m}}|$, a product of a power of $p$ and the class number $Cl_{\Q(\zeta_p)}$. Therefore $R$ corresponds to a $p$-extension of the Hilbert class field $H$ of $\Q(\zeta_p)$. Consider any prime $\ell$ in $R$. The inertia degree $|I_{R/\Q(\zeta_p)}^{\ell}|=|I_{R/H}^{\ell}|$ of $\ell$ in $R/\Q(\zeta_p)$ divides $[R:H]$ and is a power of $p$. So $|I_{K/K'}|$ is also a power of $p$. But $K/\Q$ is tamely ramified. So $I^{\ell}_{K/(K'\cap H)}$ is trivial, thus $K/K'$ is unramified, i.e.\ $K$ is contained in the Hilbert class field of $\Q(\zeta_p)$.\hfill$\square$
\end{proof4*}

\smallskip
\providecommand{\bysame}{\leavevmode\hbox
 to3em{\hrulefill}\thinspace}

\end{document}